\documentclass[a4paper,11pt]{amsart}
\usepackage[T1]{fontenc}

\usepackage[utf8]{inputenc}

\usepackage[stretch=10]{microtype}
\usepackage{mathpazo}

\usepackage{hyphenat} %
\hyphenation{mo-noids mo-noid mo-noi-dal mo-noi-da-li-ty co-com-plete push-out base-point co-fiber co-limit un-based sim-pli-cial car-te-sian co-car-te-sian group-like over-ca-te-go-ry ad-joint ten-sored co-pro-duct pre-ad-dit-ive schlicht-krull null-ho-mo-to-pic co-tan-gent ho-mo-to-py  ho-mo-to-pic}
\usepackage[all,cmtip,2cell]{xy}
\UseTwocells
\xyoption{2cell}
\usepackage{amsfonts}
\usepackage{amsthm}
\usepackage{amsmath}
\usepackage{amssymb}
\usepackage[mathscr]{euscript}
\usepackage{enumerate}
\usepackage{verbatim} %
\usepackage{mathtools}
\usepackage{url}
\usepackage{tikz-cd}
\usepackage{todonotes}
\usepackage{geometry}
\usepackage[pdftex]{hyperref} %

\usepackage[nameinlink,capitalise,noabbrev]{cleveref}
\theoremstyle{definition} \newtheorem{defn}[equation]{Definition}
\theoremstyle{definition} \newtheorem{prop-def}[equation]{Proposition-Definition}
\theoremstyle{plain} \newtheorem{thm}[equation]{Theorem}
\theoremstyle{plain} \newtheorem{lemma}[equation]{Lemma}
\theoremstyle{plain} \newtheorem{prop}[equation]{Proposition}
\theoremstyle{plain} \newtheorem{cor}[equation]{Corollary}
\theoremstyle{plain} \newtheorem*{thm*}{Theorem}
\theoremstyle{remark} \newtheorem{rmk}[equation]{Remark}
\theoremstyle{definition} \newtheorem{notn}[equation]{Notation}
\theoremstyle{remark} \newtheorem{ex}[equation]{Example}
\numberwithin{equation}{section}

\newtheoremstyle{TheoremNum}
{}{}              %
{\itshape}                      %
{}                              %
{\bfseries}                     %
{.}                             %
{ }                             %
{\thmname{#1}\thmnote{ \bfseries #3}}%
\theoremstyle{TheoremNum}
\newtheorem{thmn}{Theorem}

\newcommand{\D}{\mathscr{D}}
\newcommand{\E}{\mathscr{E}}
\newcommand{\M}{\mathcal{M}}

\newcommand{\C}{\mathscr{C}}

\newcommand{\Z}{\mathbb{Z}}
\newcommand{\Q}{\mathbb{Q}}
\newcommand{\N}{\mathbb{N}}

\renewcommand{\S}{\mathscr{S}}

\newcommand{\bS}{\mathbb{S}}
\newcommand{\id}{\mathrm{id}}
\newcommand{\Ext}{\mathrm{Ext}}
\newcommand{\fin}{\mathrm{fin}}
\newcommand{\colim}{\mathrm{colim}}
\newcommand{\fib}{\mathrm{fib}}
\newcommand{\cofib}{\mathrm{cofib}}
\newcommand{\cof}{\mathrm{cofib}}

\newcommand{\Map}{\mathrm{Map}}

\newcommand{\Hom}{\mathrm{Hom}}
\newcommand{\adj}[4]{\xymatrix{#1 \ar@<.2pc>[r]^-{#3} & \ar@<.2pc>[l]^-{#4} #2 }}
\newcommand{\eqv}[4]{\xymatrix{#1 \ar@<.5pc>[r]^-{#3} & \ar@<.5pc>[l]^-{#4}_-\sim #2 }}

\renewcommand{\Bar}{\mathrm{Bar}}

\newcommand{\CAlg}{\mathrm{CAlg}}
\newcommand{\nuca}{\mathrm{NUCA}}
\newcommand{\Sp}{\mathrm{Sp}}

\newcommand{\Spc}{\mathrm{Sp}^{\textup{cn}}}
\newcommand{\Pic}{\mathrm{Pic}}

\newcommand{\Mod}{\mathrm{Mod}}

\newcommand{\Fun}{\mathrm{Fun}}

\newcommand{\sma}{\wedge}
\newcommand{\Sym}{\mathrm{Sym}}
\newcommand{\Der}{\mathrm{Der}}
\newcommand{\diag}{\mathrm{diag}}

\DeclareMathOperator{\otilde}{\widetilde{\otimes}}

\newcommand{\Exc}{\mathrm{Exc}}
\newcommand{\LO}{\mathbb{L}\Omega}
\renewcommand{\epsilon}{\varepsilon}

\newcommand{\paralelas}[2]{\ar@<.2pc>[r]^-{#1} \ar@<-.2pc>[r]_-{#2}}

\newcommand{\under}[2]{{{#1}/ \hspace{-.08cm} / {#2}}}
\newcommand{\AB}{{\under{A}{B}}}
\newcommand{\BB}{{\under{B}{B}}}

\newcommand{\RS}{{\under{R}{S}}}

\newcommand{\Mon}{\mathrm{Mon}_{E_\infty}}
\newcommand{\Grp}{\mathrm{Grp}_{E_\infty}}
\newcommand{\Grpn}{\mathrm{Grp}_{E_n}}
\newcommand{\Grpp}[1]{\mathrm{Grp}_{E_{#1}}}

\DeclareMathAlphabet{\mathbbe}{U}{bbold}{m}{n}

\definecolor{bubblegum}{rgb}{0.99, 0.76, 0.8}

\DeclareUnicodeCharacter{00A0}{ } %

\definecolor{dark-red}{rgb}{0.6,.15,.15}
\definecolor{dark-blue}{rgb}{.1,.1,.65}
\definecolor{dark-green}{rgb}{.1,.65,.1}
\hypersetup{pdfauthor={Nima Rasekh, Bruno Stonek},colorlinks=true, citecolor=dark-blue, urlcolor=dark-green, linkcolor=dark-red,breaklinks=true,hypertexnames=false}
\linespread{1.1}
\geometry{margin=1in}

\title{The cotangent complex and Thom spectra}
\author{Nima Rasekh}
\address{{\'E}cole Polytechnique F{\'e}d{\'e}rale de Lausanne, SV BMI UPHESS, Station 8, CH-1015 Lausanne, Switzerland}
\email{nima.rasekh@epfl.ch}
\author{Bruno Stonek}
\address{Institute of Mathematics, Polish Academy of Sciences,  ul. {\' S}niadeckich 8, 00-656 Warszawa, Poland}
\email{bruno@stonek.com}

\begin{document}
\date{\today}
\keywords{Cotangent complex, structured ring spectra, Thom spectra, higher category theory, Goodwillie calculus. \emph{MSC classes:} 55P43 (Primary), 14F10 (Secondary)}

\begin{abstract} 
The cotangent complex of a map of commutative rings is a central object in deformation theory. Since the 1990s, it has been generalized to the homotopical setting of $E_\infty$-ring spectra in various ways.

In this work we first establish, in the context of $\infty$-categories and using Goodwillie's calculus of functors, that various definitions of the cotangent complex of a map 
of $E_\infty$-ring spectra that exist in the literature are equivalent. We then turn our attention to a specific example. Let $R$ be an $E_\infty$-ring spectrum and $\Pic(R)$ denote its Picard $E_\infty$-group. Let $Mf$ denote the Thom $E_\infty$-$R$-algebra of a map of $E_\infty$-groups $f:G\to \Pic(R)$; examples of $Mf$ are given by various flavors of cobordism spectra. We prove that the cotangent complex of $R\to Mf$ is equivalent to the smash product of $Mf$ and the connective spectrum associated to $G$.
\end{abstract}

\maketitle

\section{Introduction}

\subsection{Deformation theory}

The cotangent complex arises as a central object of deformation theory. One wishes to understand extensions of functions between geometric spaces to infinitesimal thickenings of those spaces. Working algebraically, the simplest example of an infinitesimal thickening of a commutative $R$-algebra $S$ is given by a \emph{square-zero extension}: that is, a surjective map of commutative $R$-algebras $\phi:\widetilde S\to S$ such that the product of any two elements in the kernel of $\phi$ is zero.\\ %

One way to construct square-zero extensions is using derivations. If $M$ is an $S$-module, an \emph{$R$-linear derivation} $S\to M$ is a map of $R$-modules which satisfies the Leibniz condition. The $S$-module of $R$-linear derivations $\Der_R(S,M)$ is co-represented by the $S$-module of Kähler differentials $\Omega_{S/R}$. It is also represented by the commutative $R$-algebra over $S$ given by the \emph{trivial square-zero extension} $S\oplus M$ with multiplication given by $(s,m)(s',m')=(ss',sm'+ms')$. Therefore, we have bijections
\begin{equation}\label{intro-isos}
\Hom_{\Mod_S}(\Omega_{S/R},M) \cong \Der_R(S,M) \cong \Hom_{\CAlg_\RS}(S,S\oplus M)
\end{equation}
where $\CAlg_\RS$ denotes the category of commutative rings $C$ with maps of commutative rings $R\to C\to S$ composing to the unit of $S$.

Starting from a class in $\Ext^1_S(\Omega_{S/R},M)$ represented by an extension of $S$-modules \[M\to \widetilde{M}\to \Omega_{S/R},\] one can build a square-zero extension of $S$ by $M$ by pulling back the second map along the universal derivation $S \to \Omega_{S/R}$ in $\Mod_R$:
\[\xymatrix{\widetilde{S} \ar[r] \ar[d] & S \ar[d] \\ \widetilde{M} \ar[r] & \Omega_{S/R}.}\]
The pullback $\widetilde{S}$ gets a commutative $R$-algebra structure, and the map $\widetilde{S}\to S$ is surjective with kernel isomorphic to $M$ as an $R$-module. %

However, this does not produce all square-zero extensions: for that, one needs to \emph{derive} the module of Kähler differentials. The way this was originally achieved independently by André \cite{andre} and Quillen \cite{quillen-homology} was by simplicial methods. Quillen placed a model structure on the category of simplicial commutative algebras. This produces a simplicial $S$-module $\LO_{S/R}$ (called the \emph{algebraic cotangent complex} by Lurie \cite[25.3]{sag}), and the first \emph{André-Quillen cohomology $S$-module} $\Ext^1_S(\LO_{S/R},M)$ is isomorphic to the $S$-module of equivalence classes of square-zero extensions of $S$ by $M$.\\ %

We shall not take this approach, but rather work with the more general $E_\infty$-ring spectra. The module of Kähler differentials in this context is replaced by the \emph{cotangent complex}: if $A\to B$ is a map of $E_\infty$-ring spectra, the cotangent complex is a $B$-module $L_{B/A}$. Using $L_{S/R}$\footnote{When treating a discrete commutative ring as an $E_\infty$-ring spectrum, the Eilenberg--Mac Lane functor shall be understood.} instead of $\LO_{S/R}$, André-Quillen (co)homology is replaced by \emph{topological André-Quillen (co)homology}, also known as $TAQ$. The precise relation between $\LO_{B/A}$ and $L_{B/A}$ can be found in \cite[25.3.3.7/25.3.5.4]{sag}.

One can define square-zero extensions of $E_\infty$-ring spectra. The cotangent complex helps classify them. As is usual in homotopy theory, one does not merely want to describe \emph{equivalence classes} of square-zero extensions. One would like to prove that the whole $\infty$-category of square-zero extensions of an $E_\infty$-$A$-algebra $B$ is equivalent to the $\infty$-category of $A$-linear derivations $B\to \Sigma M$ where $M$ is a $B$-module.\footnote{Recall that $\Ext^1_B(L_{B/A},M)\simeq \pi_0 (\Map_{\Mod_B}(L_{B/A},\Sigma M))$.} %
This is proven in \cite[7.4.1.26]{ha}, provided one restricts the $\infty$-categories a bit. Note that the traditional definition of derivations as linear maps that satisfy the Leibniz rule does not work in this setup, whereas the interpretations of (\ref{intro-isos}) do.
\\

Apart from the connections to deformation theory, the cotangent complex $L_{B/A}$ helps detect useful properties of the map $f: A\to B$. For example, if $A$ and $B$ are connective, then $f$ is an equivalence if and only if $\pi_0(f):\pi_0(A)\to \pi_0(B)$ is an isomorphism and $L_{B/A}$ vanishes \cite[7.4.3.4]{ha}. The theory of the cotangent complex also helps in proving theorems that do not mention it at all: for example, if $A$ is an $E_\infty$-ring spectrum and a map of commutative rings $\pi_0(A)\to B_0$ is étale, then it lifts in an essentially unique way to an étale map of $E_\infty$-ring spectra $A\to B$ \cite[7.5.0.6]{ha}.

\subsection{Different approaches to the cotangent complex}

Let us trace the history of the $E_\infty$ cotangent complex of a map of $E_\infty$-ring spectra $A\to B$, since it has been defined in different ways in the literature.

The first definition can be found in a preprint by Kriz \cite{kriz}. It was defined as a sequential colimit built out of tensoring $B\sma_A B$ with spheres, in a certain way.

Basterra \cite{basterra} took another approach: she defined the cotangent complex to be the indecomposables of the \emph{augmentation ideal} $I(B\sma_A B)$ of $B\sma_A B$, which is the fiber of the multiplication map $B\sma_A B\to B$. She did not prove the equivalence with Kriz's approach.

Later, herself and Mandell \cite{basterra-mandell} established the connection between Basterra's definition of the cotangent complex and \emph{stabilization}. Just as Beck had observed in the sixties that the category of modules over a commutative ring $R$ was equivalently given by the abelian group objects in augmented commutative $R$-algebras \cite{beck}, they proved the $E_\infty$-analog. Abelian group objects have to be replaced by \emph{spectra objects}. They proved that to get $L_{B/A}$, one can start from $B\sma_AB$ considered as an augmented commutative $B$-algebra, then \emph{stabilize} it, i.e. apply $\Omega^\infty \Sigma^\infty$ to it, then take its augmentation ideal.

It was known to the experts that one could extract from the results of Basterra and Mandell an expression of the cotangent complex as a sequential colimit, similar to Kriz's expression, see e.g. \cite[Page 164]{schlichtkrull-higher}. However, we think a full description of how these approaches are connected has not appeared in the literature. We take the opportunity to expand on them in \cref{section-cotangent}.

The approach to the cotangent complex taken by Lurie in \cite[7.3]{ha} is closest to Basterra and Mandell's approach, albeit in the realm of $\infty$-categories rather than in that of model categories. We feel a unified study of the different approaches in Lurie's setting was lacking: we provide one here. We adopt the language of the Goodwillie calculus of functors as developed by Lurie in \cite[Chapter 6]{ha}. Our \cref{section-good} will swiftly introduce the necessary results. The fact that the the cotangent complex can be understood via Goodwillie calculus was known, see e.g. \cite[5.4]{kuhn-goodwillie}. 

In summary, we prove in the $\infty$-categorical context of \cite{ha} that the cotangent complex $L_{B/A}$ can be presented in the following ways:

\begin{itemize}
\item As the augmentation ideal of the stabilization of $B\sma_A B$, i.e. $I(\Omega^\infty \Sigma^\infty (B\sma_A B))$  (\ref{rmk-commuting}/\ref{eq-deftaq}),
\item As the excisive approximation of $I$ evaluated in $B\sma_AB$, i.e. $(P_1I)(B\sma_AB)$  (\ref{eq-p1}),
\item As the sequential colimit of $B$-modules (\ref{prop-colim})
\[L_{B/A}\simeq \colim_{\Mod_B} (\xymatrix@C+1pc{S^0\otilde_A B \ar[r] & \Omega (S^1\otilde_A B) \ar[r] & \Omega^2 (S^2\otilde_A B \ar[r]) & \cdots }), \]
\item As the module of indecomposables of the augmentation ideal of $B\sma_A B$ (\ref{cor-indecomposables}).
\end{itemize}
Here $- \tilde\otimes - $ is a certain operation to be introduced in \cref{not-otilde} that takes a pointed space and an $E_\infty$-$A$-algebra $B$ and returns a $B$-module.

\subsection{Thom spectra} The main result of this paper is the determination of the cotangent complex of \emph{Thom $E_\infty$-ring spectra}. Let us quickly recall what these are, following the $\infty$-categorical approach of \cite{abghr-infty}. Let $G$ be a space, $R$ be an $E_\infty$-ring spectrum, $\Pic(R)$ be the \emph{Picard space} of $R$ (the subspace of $\Mod_R$ spanned by the invertible $R$-modules), and $f:G\to \Pic(R)$ be a map. The colimit of $\xymatrix{G \ar[r]^-f & \Pic(R) \ar@^{^(-}[r] & \Mod_R}$ is the \emph{Thom $R$-module} of $f$, denoted $Mf$. In fact, $\Pic(R)$ is an $E_\infty$-group. When $G$ is an $E_\infty$-group and $f$ is a map of $E_\infty$-groups, then $Mf$ gets the structure of an $E_\infty$-$R$-algebra \cite{abg}, \cite{barthel-antolin}. Examples of Thom $E_\infty$-ring spectra include complex cobordism $MU$ and periodic complex cobordism $MUP$.

\begin{thmn}[\ref{thm-taqthom}]Let $R$ be an $E_\infty$-ring spectrum. Let $f:G\to \Pic(R)$ be a map of $E_\infty$-groups. There is an equivalence of $Mf$-modules
\[L_{Mf/R} \simeq Mf \sma B^\infty G.\]
\end{thmn}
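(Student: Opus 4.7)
The plan is to combine the sequential colimit description of the cotangent complex established in \cref{prop-colim} with a Thom-diagonal identification of $Mf\sma_R Mf$. The key input is an equivalence of augmented $E_\infty$-$Mf$-algebras
\[Mf\sma_R Mf \simeq Mf \sma \Sigma^\infty_+ G,\]
where the augmentation on the right-hand side is induced by $G\to *$. This arises because $f$ is a map of $E_\infty$-groups: the multiplied map $f\cdot f\colon G\times G\to \Pic(R)$, $(g,h)\mapsto f(g)\cdot f(h) = f(gh)$, can be reparametrized via the shearing auto-equivalence $(g,h)\mapsto (g,gh)$ of $G\times G$ so that it becomes $f\circ \mathrm{pr}_2$. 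Applying the Thom-spectrum functor, which preserves colimits and which is $E_\infty$-monoidal, promotes this reparametrization to the asserted equivalence of augmented $E_\infty$-$Mf$-algebras.

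With this identification, we iterate the suspension in augmented $E_\infty$-$Mf$-algebras. The zero object of this $\infty$-category is $Mf$ itself, so the categorical suspension is given by $\tilde\Sigma A \simeq Mf\sma_A Mf$. Applied to $A = Mf\sma\Sigma^\infty_+ G$ and using the base-change identity for relative smash products, we obtain
\[\tilde\Sigma(Mf\sma\Sigma^\infty_+ G) \simeq Mf\sma(\bS\sma_{\Sigma^\infty_+ G}\bS) \simeq Mf\sma\Sigma^\infty_+ BG,\]
the last step being the classical bar-construction identification $\bS\sma_{\Sigma^\infty_+ G}\bS \simeq \Sigma^\infty_+ BG$ for the $E_\infty$-group $G$. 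Since $BG$ remains group-like, iterating yields $\tilde\Sigma^n(Mf\sma_R Mf) \simeq Mf\sma\Sigma^\infty_+ B^n G$; the corresponding augmentation ideal, which represents the $n$-th term $S^n\otilde_R Mf$ of the colimit diagram, is the $Mf$-module $Mf\sma\Sigma^\infty B^n G$.

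Feeding this into the colimit formula and using that $Mf\sma -$ commutes with sequential colimits and with $\Omega$ in spectra, we compute
\[L_{Mf/R}\simeq \colim_n \Omega^n(Mf\sma\Sigma^\infty B^n G) \simeq Mf\sma\colim_n \Omega^n\Sigma^\infty B^n G \simeq Mf\sma B^\infty G,\]
the last equivalence being the standard presentation of the connective spectrum $B^\infty G$ as the colimit of $\Omega^n\Sigma^\infty B^n G$ under the maps adjoint to the deloopings $\Sigma B^n G \to B^{n+1}G$.

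The main obstacle will be establishing both the Thom-diagonal equivalence and the iterated-suspension identification as equivalences of augmented $E_\infty$-$Mf$-algebras, rather than merely of $Mf$-modules. This requires a careful use of the symmetric monoidality of the $\infty$-categorical Thom spectrum functor, as developed in \cite{abghr-infty,abg,barthel-antolin}, and tracking the algebra-plus-augmentation data through the bar constructions at each iteration.
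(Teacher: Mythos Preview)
Your proposal is correct and follows essentially the same skeleton as the paper's proof: identify the iterated suspensions $\Sigma^n(Mf\sma_R Mf)$ in $\CAlg_{\under{Mf}{Mf}}$ as $Mf\sma\bS[B^nG]$, take augmentation ideals to get $S^n\otilde_R Mf\simeq Mf\sma\Sigma^\infty B^nG$, feed this into \cref{prop-colim}, pull $Mf\sma-$ out of the colimit, and identify $\colim_n\Omega^n\Sigma^\infty B^nG$ with $B^\infty G$.

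The work is distributed differently in two places. First, the paper imports the identification $S^n\otimes_R Mf\simeq Mf\sma\bS[B^nG]$ in $\CAlg_{\under{Mf}{Mf}}$ directly from \cite[4.11]{rsv-thom}, whereas you rederive it from the Thom diagonal $Mf\sma_R Mf\simeq Mf\sma\bS[G]$ together with the bar identity $\bS\sma_{\bS[G]}\bS\simeq\bS[BG]$; your route is more self-contained but requires the care you flag about tracking augmented-algebra structure through the shearing and the bar constructions. Second, the final identification $\colim_n\Omega^n\Sigma^\infty B^nG\simeq B^\infty G$ that you call ``standard'' is exactly what the paper isolates and proves as \cref{Proposition}, by showing that the counit $\Sigma^\infty\Omega^\infty\Rightarrow\id_{\Sp}$ exhibits $\id_{\Sp}$ as the excisive approximation to $\Sigma^\infty\Omega^\infty$ and then recognizing $B^nG\simeq\Omega^\infty\Sigma^n B^\infty G$; you should either cite a reference for this or supply the short Goodwillie-calculus argument, and in either case check that the transition maps produced by \cref{prop-colim} really do match the ones in that colimit presentation.
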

Here $B^\infty G$ denotes the connective spectrum associated to $G$.

A model-categorical version had first appeared in \cite{basterra-mandell}. For example, we recover the equivalence of $MU$-modules \[L_{MU}\simeq MU \sma bu\] of that paper. The result of Basterra and Mandell, however, only applies to Thom spectra of maps to $BGL_1(\bS)$, whereas ours applies to maps to $\Pic(R)$ where $R$ is any $E_\infty$-ring spectrum. This allows for generalized, possibly non-connective Thom $E_\infty$-ring spectra. For example, we get that \[L_{MUP}\simeq MUP\sma ku\]%
as $MUP$-modules, where $ku$ denotes the $E_\infty$-ring spectrum of connective complex topological $K$-theory. In fact, $L_{MUP}$ is actually a Thom $E_\infty$-$ku$-algebra. More generally, we observe in \cref{prop-ringspace} that when $G$ is an \emph{$E_\infty$-ring space}, then the $R$-module $L_{Mf/R}$ underlies a Thom $E_\infty$-($R\sma B^\infty G$)-algebra. In other words, with this additional hypothesis the cotangent complex of a Thom $E_\infty$-algebra becomes a Thom $E_\infty$-algebra.

In \cref{section-etale} we extend \cref{thm-taqthom} to cotangent complexes of two types of extensions of Thom algebras: if $Mf\to B$ is a map $E_\infty$-$R$-algebras which is either étale or solid (i.e. the multiplication $B\sma_{Mf}B \to B$ is an equivalence), then
\[L_{B/R} \simeq B\sma B^\infty G.\]
This allows us, for example, to recover the equivalence $L_{KU}\simeq KU\sma H\Q$ from \cite{stonek-thhku}, where $KU$ denotes the $E_\infty$-ring spectrum of periodic complex topological $K$-theory.

\subsection{Notation and conventions}  \label{sect-notation}
We will freely use the language of $\infty$-categories as developed in \cite{htt}, \cite{ha}.

Let $\C$ be an $\infty$-category. Given a fixed map $f:A \to B$, we denote by $\C_\AB$ the $\infty$-category 
$(\C_{/B})_{f/}$ of objects $C\in \C$ together with maps $A\to C\to B$ which compose to $f$.
Similarly, we denote by $\C_\BB$ the $\infty$-category $(\C_{/B})_{\id_B/}$. 

If $\C$ is an $\infty$-category with terminal object $T$, we let $\C_*$ denote the undercategory $\C_{T/}$. 

The $\infty$-category of spaces %
will be denoted by $\S$, and that of spectra by $\Sp$. Its full subcategory of connective spectra is denoted by $\Spc$. We denote by $\CAlg$ the $\infty$-category $\CAlg(\Sp)$ of  $E_\infty$-ring spectra, and by $\CAlg_R$ that of $E_\infty$-algebras over an $E_\infty$-ring spectrum $R$, i.e. $\CAlg(\Mod_R)$. The suspension spectrum functor $\S\to \Sp$ is denoted $\Sigma^\infty_+$, and if $G\in \CAlg(\S)$, then $\bS[G]$ denotes the $E_\infty$-ring spectrum $\Sigma^\infty_+(G)$.

\subsection{Acknowledgments}
The authors would like to thank Hongyi Chu, Yonatan Harpaz and Piotr Pstr\k{a}gowski for helpful discussions, and Jacob Lurie for helping us find a better proof of \cref{Proposition} than the one we had, by suggesting the use of \cite[1.4.2.24]{ha}. 
The first-named author would like to thank the Institut des Hautes {\'E}tudes Scientifiques (IHES) for its hospitality and financial support.
The second-named author would like to thank the Max Planck Institute for Mathematics for its hospitality and financial support.

\section{Short review of stabilization and Goodwillie calculus}
\label{section-good}
 
Let us summarize some notions from the Goodwillie calculus of functors which we shall be using. We shall work with the $\infty$-categorical version of it as in \cite[Chapter 6]{ha}. For simplicity, let us assume that $\C$ and $\D$ are $\infty$-categories which are pointed and presentable.%

\begin{enumerate}
\item A functor $F:\C\to \D$ is \emph{reduced} if it takes a final object to a final object. It is \emph{excisive} if it takes pushout squares to pullback squares. The full subcategory of $\Fun(\C,\D)$ spanned by the excisive functors is denoted $\Exc(\C,\D)$, and the one spanned by the reduced, excisive functors is denoted $\Exc_*(\C,\D)$.
 
\item \label{item-spectra}The $\infty$-category of spectra in $\C$ is defined by $\Sp(\C) =\Exc_*(\S_*^\fin,\C)$ where $\S_*^\fin$ is the $\infty$-category of pointed finite spaces \cite[1.4.2.8]{ha}. Evaluation at the sphere $S^0$ defines a functor $\Omega^\infty:\Sp(\C)\to \C$ which is an equivalence when $\C$ is stable \cite[1.4.2.21]{ha}; 
since $\C$ is pointed and presentable, $\Omega^\infty$ admits a left adjoint $\Sigma^\infty:\C\to \Sp(\C)$ \cite[1.4.4.4]{ha}. If $\C$ is presentable but not pointed, the left adjoint to $\Omega^\infty$ is denoted $\Sigma^\infty_+$ and it factors into two left adjoint functors $\C\xrightarrow{(-)_+}  \C_* \xrightarrow{\Sigma^\infty} \Sp(\C_*)\simeq \Sp(\C)$ %
\cite[4.10]{ggn}.

\item \label{item-colim} The inclusion $\Exc(\C,\D)\to \Fun(\C,\D)$ has a left adjoint $P_1$, called the \emph{excisive approximation} functor \cite[6.1.1.10]{ha}. The unit natural transformation $F\Rightarrow P_1F$ is said to \emph{exhibit $P_1F$ as the excisive approximation to $F$}. If $F:\C\to \D$ is reduced, then $P_1F$ is reduced, %
and by \cite[6.1.1.28]{ha}, \[P_1F \simeq \colim_n (\Omega^n_\D \circ F \circ \Sigma^n_\C).\] 

\item \label{item-partialf} \cite[6.2.1.4]{ha} If $F:\C\to \D$ is left exact (i.e. preserves finite limits), then composition with $F$ defines a functor $\partial F:\Sp(\C)\to \Sp(\D)$, the \emph{(Goodwillie) derivative} which makes the following diagram commute
\[
\xymatrix{
\Sp(\C) \ar[r]^-{\partial F} \ar[d]_-{\Omega^\infty_\C} & \Sp(\D) \ar[d]^-{\Omega^\infty_\D} \\ \C \ar[r]_-F & \D.}
\]

\item \label{item-general} In the case of an arbitrary functor $F:\C\to \D$, derivatives admit a general definition \cite[6.2.1.1]{ha}: they consist of a functor $\partial F:\Sp(\C)\to \Sp(\D)$ and a natural transformation $F\circ \Omega^\infty_\C\Rightarrow \Omega^\infty_\D \circ \partial F$ satisfying some properties. Derivatives are unique up to canonical equivalence \cite[6.2.1.2]{ha}, and there are very general existence results: in particular, under the conditions on $\C$ and $\D$ which we have imposed, every $F:\C\to \D$ admits a derivative \cite[6.2.1.9/6.2.3.13]{ha}.

\item \label{item-p1f} \cite[Page 1071]{ha} If $F:\C\to \D$ is reduced and preserves filtered colimits, then \[P_1F\simeq \Omega^\infty_\D \circ \partial F \circ \Sigma^\infty_\C.\]

\item \label{item-id} $\partial \id_\C\simeq \id_{\Sp(\C)}$ %
and $P_1\id_\C \simeq \Omega^\infty_\C \circ \Sigma^\infty_\C \simeq \colim_n(\Omega^n_\C \circ \Sigma^n_\C)$. %
In particular, if $\C$ is stable then $\id_\C \simeq \colim_n(\Omega^n_\C \circ \Sigma^n_\C)$. %

\item \label{item-p1f2} %
If $F:\C\to \D$ is reduced, left exact and preserves filtered colimits, then \[P_1F\simeq F \circ \Omega^\infty_\C \circ \Sigma^\infty_\C,\] as follows from (\ref{item-p1f}) and (\ref{item-partialf}).

\end{enumerate}

\section{The cotangent complex} \label{section-cotangent}

In this section, we introduce the cotangent complex of a map of $E_\infty$-ring spectra and we give different expressions for it: via the augmentation ideal, via a stabilization process, i.e. as a sequential colimit, and via indecomposables.

Before going to $E_\infty$-ring spectra, let us say a word on the general definition. The relative cotangent complex according to Lurie is a suspension spectrum, in the following sense:

\begin{defn} Let $\C$ be a presentable $\infty$-category and $f:A\to B$ in $\C$. Consider the suspension spectrum functor
\[\xymatrix{\C_\AB \ar[r]^-{\Sigma^\infty_+} & \Sp(\C_\BB).}\]
The \emph{relative cotangent complex} of $f$ is the image of $A\xrightarrow{f}B\xrightarrow{\id}B$ by this functor, and it is denoted $L_{B/A}$. If $A$ is an initial object of $\C$, then $L_{B/A}$ is also denoted $L_B$ and it is called the \emph{absolute cotangent complex} of $B$.
\end{defn}

\begin{rmk} \label{rmk-sip} Let us say a word about the $\Sigma^\infty_+$ functor above.  Since $\id:B\to B$ is the terminal object of $\C_{/B}$, then \[\C_\BB =(\C_{/B})_{\id_B/} \simeq (\C_{/B})_*.\]
Note as well that $\C_\AB=(\C_{/B})_{f/}\simeq (\C_{A/})_{/f}$.  On the other hand, by \cite[7.3.3.9]{ha}, we have $(\C_\AB)_* \simeq \C_\BB$. Therefore, $\Sigma^\infty_+$ factors as
\[\xymatrix{\C_\AB \ar[r]^-{-\sqcup_A B} & \C_\BB \ar[r]^-{\Sigma^\infty} & \Sp(\C_\BB).}\]
\end{rmk}

In order to address the issue of functoriality of the cotangent complex, Lurie uses the \emph{tangent bundle} of $\C_\AB$. We shall not be needing this, so for the sake of simplicity we will not introduce it.

Let us now concentrate on the case $\C=\CAlg$.

\subsection{The cotangent complex via the augmentation ideal}

When $\C=\CAlg$, we may identify $\Sp(\C_\BB)$ with a more familiar $\infty$-category, namely $\Mod_B$, as we shall now see. Note that $\CAlg_\BB$ is the $\infty$-category of \emph{augmented} $E_\infty$-$B$-algebras: its objects are $E_\infty$-$B$-algebras $C$ with a map $C\to B$ of $E_\infty$-$B$-algebras.

\begin{defn} %
\label{def-i} Let $B\in \CAlg$. The \emph{augmentation ideal} functor
\[I:\CAlg_\BB\to \Mod_B\]
takes $C$ to the fiber in $\Mod_B$ of the augmentation, i.e. to the pullback
\[\begin{tikzcd} [row sep=.6cm, column sep=.6cm]
I(C) \arrow[r] \arrow[d] \arrow[dr, phantom, "\lrcorner", very near start]
& C \arrow[d] \\
0 \arrow[r] 
& B
\end{tikzcd}
\]
in $\Mod_B$, where $0$ denotes a zero object of $\Mod_B$.\end{defn}

\begin{rmk}The functor $I$ is right adjoint to the functor $\Mod_B \to \CAlg_\BB$ which takes a module $M$ to the free $E_\infty$-$B$-algebra $\bigvee_{n\geq 0} (M^{\sma_B n})_{\Sigma_n}\in \CAlg_B$ endowed with the augmentation over $B$ given by projection to the $0$-th summand. Here $(-)_{\Sigma_n}$ denotes the (homotopy) orbits for the $\Sigma_n$-action \cite[3.1.3.14, 7.3.4.5]{ha}.
Note that $I$ is reduced, as it takes $B$ to the zero module.
\end{rmk}

\begin{notn} We let $\nuca_B$ denote the $\infty$-category of non-unital $E_\infty$-$B$-algebras, which we call \emph{nucas}\footnote{The \emph{c} in \emph{nuca} stands for \emph{commutative}.} \cite[5.4.4.1]{ha}. %
\end{notn}

\begin{rmk}\label{rmk-nuca}The augmentation ideal functor factors through $\nuca_B$ as follows:
\[\xymatrix@C-.8pc@R-.8pc{\CAlg_\BB \ar[rr]^-I \ar[rd]_-{I_0} && \Mod_B \\ & \nuca_B \ar[ru]_-U}\]
where $U$ is the forgetful functor. Indeed, one can take the pullback defining $I$ in \cref{def-i} in the $\infty$-category $\nuca_B$ instead of in $\Mod_B$, which defines $I_0$. 
The functor $I_0$ is a right adjoint to the functor $N\mapsto B\vee N$, and it is in fact an equivalence \cite[5.4.4.10]{ha}, \cite[2.2]{basterra}. Note as well that $I$ commutes with sifted colimits, since $U$ does \cite[3.2.3.1]{ha}. %
In particular, $I$ commutes with filtered or sequential colimits.
\end{rmk}

\begin{thm} \cite[7.3.4.7/14]{ha} \label{thm-spcalg} The functor
\[\xymatrix{\Sp(\CAlg_\BB) \ar[r]^-{\partial I} & \Sp(\Mod_B)}\]
is an equivalence of $\infty$-categories; in particular, $\Sp(\CAlg_\BB) \xrightarrow{\partial I} \Sp(\Mod_B) \xrightarrow{\Omega^\infty} \Mod_B$ is an equivalence as well.
\end{thm}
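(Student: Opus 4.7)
The plan is to leverage the monadic adjunction between $\nuca_B$ and $\Mod_B$. By \cref{rmk-nuca}, the augmentation ideal factors as $I\simeq U\circ I_0$ with $I_0$ already an equivalence of $\infty$-categories, so the task reduces to showing that $\partial U\colon \Sp(\nuca_B)\to \Sp(\Mod_B)$ is an equivalence of stable presentable $\infty$-categories.

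The forgetful functor $U$ admits a left adjoint $F$, the free nuca functor, given by $F(M)\simeq \bigvee_{n\geq 1}(M^{\sma_B n})_{\Sigma_n}$. Since $U$ is conservative and preserves sifted colimits by \cref{rmk-nuca}, the adjunction $F\dashv U$ is monadic. Applying Goodwillie stabilization then yields an adjunction $\partial F\dashv \partial U$ between $\Sp(\Mod_B)\simeq \Mod_B$ and $\Sp(\nuca_B)$, and my strategy is to identify the induced monad $T := \partial U\circ \partial F$ with the identity on $\Mod_B$.

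To compute $T$, decompose $UF(M) \simeq M \vee \bigvee_{n\geq 2}(M^{\sma_B n})_{\Sigma_n}$ and invoke the classical fact that $(-)^{\sma_B n}_{\Sigma_n}$ is $n$-homogeneous in the sense of Goodwillie calculus, so its linear approximation vanishes for $n\geq 2$. Only the $n=1$ summand, which is the identity, survives linearization, giving $P_1(UF)\simeq \id_{\Mod_B}$ and hence $T\simeq \id$. The stabilized monadic adjunction with trivial monad then exhibits $\partial U$ as an equivalence $\Sp(\nuca_B)\simeq \mathrm{Mod}_{\id}(\Mod_B)\simeq \Mod_B$.

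The main obstacle is the meta-step: one must justify that stabilization of the monadic adjunction $F\dashv U$ remains monadic, and that the stabilized monad is computed by the Goodwillie linearization $P_1(UF)$ rather than something more intricate. In the pointed presentable setting of \cite[Chapter 6]{ha} this works out, since $\partial U$ inherits conservativity and colimit preservation from $U$, and the bar construction computing algebras for the linearized monad is compatible with $\Sigma^\infty$; nonetheless, the verification of these coherences is the most delicate part of the argument, and one could alternatively bypass it by a direct unit/counit check after establishing $P_1 F \simeq \id$ via the colimit formula for $P_1$.
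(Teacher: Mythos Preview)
The paper does not supply a proof of this theorem: it simply records the statement with a citation to \cite[7.3.4.7/14]{ha}. So strictly speaking there is nothing to compare against here. That said, your approach is essentially the one Lurie uses in \cite[7.3.4.7]{ha}, and it is also exactly the argument the paper itself runs later in the proof of \cref{thm-three} to establish that $\partial U$ is an equivalence (there for the purpose of identifying $P_1U$ with $Q$). In that later proof, the paper factors through $I_0$ just as you do, invokes monadicity of $U$, and reduces to showing that the unit $\id_{\Mod_B}\Rightarrow U\circ F$ becomes an equivalence after linearization by checking that each $\Sym^n$ for $n\geq 2$ is $n$-reduced and hence has vanishing derivative.

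The ``meta-step'' you flag as delicate---that the stabilized adjunction is still monadic, so that $\partial U$ is an equivalence once the stabilized monad is trivial---is precisely the content of \cite[6.2.2.17]{ha}, which the paper cites at that point: for a monadic right adjoint $U$, the derivative $\partial U$ is an equivalence as soon as the unit $\id\Rightarrow UF$ induces an equivalence of derivatives. So your worry is legitimate but already resolved by a black-box citation; you do not need to redo the Barr--Beck argument after stabilization by hand. With that citation in place, your proposal is correct and matches both Lurie's proof and the paper's later internal use of it.
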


\begin{rmk} %
\label{rmk-bm-model} A model-categorical precedent can be found as Theorem 3 of \cite{basterra-mandell}. There, the functor fitting in the place of $\partial I$ is defined as follows. First of all, in their framework a spectrum in a model category $\M$ is a sequence of objects $\{X_n\}_{n\geq 0}$ of $\M$ with maps $\Sigma X_n\to X_{n+1}$. Spectra in $\M$ have a model structure whose fibrant objects are the $\Omega$-spectra. Thus, any topological left Quillen functor $F$ between model categories enriched over based spaces induces a left Quillen functor $\underline{F}$ between the corresponding model categories of spectra: the arrows $\Sigma X_n \to X_{n+1}$ get sent to $\Sigma F(X_n)\simeq F(\Sigma X_n) \to F(X_{n+1})$. 

In particular, the augmentation ideal functor from the model category of augmented commutative $B$-algebras to the model category of $B$-modules, let us also call it $I$, induces a functor $\underline{I}$ between the respective model categories of spectra. 
After passing to their underlying $\infty$-categories, $\underline{I}$ gives a functor equivalent to $\partial I$. %
\end{rmk}

\begin{rmk}  \label{rmk-commuting}
Recall from \ref{section-good}.(\ref{item-partialf}) that $\Omega^\infty_{\Mod_B} \circ \partial I \simeq I\circ \Omega^\infty_{\CAlg_\BB}$, i.e. $\partial I$ commutes with $\Omega^\infty$. On the other hand, note that $\partial I$ typically does not commute with $\Sigma^\infty$. If it did, then \[\Omega^\infty_{\Mod_B} \circ \partial I \circ \Sigma^\infty_{\CAlg_\BB} \simeq \Omega^\infty_{\Mod_B} \circ \Sigma^\infty_{\Mod_B} \circ I \simeq I,\] but on the other hand this is equivalent to $I \circ \Omega^\infty_{\CAlg_\BB} \circ \Sigma^\infty_{\CAlg_\BB}$ which is the excisive approximation of $I$ by \ref{section-good}.(\ref{item-p1f2}). Therefore, $I$ would be excisive. %
Since $\Mod_B$ is stable, this would mean that $I$ preserves pushouts; since $I$ is also reduced, then $I$ would be right exact. But this is typically false. %
For example, $I$ does not commute with coproducts: if we take $B=\bS$, the coproduct of $\bS[S^1]$ with itself in $\CAlg_{\under{\bS}{\bS}}$ is $\bS[S^1\times S^1]$. Its augmentation ideal is $\Sigma^\infty (S^1\times S^1)\simeq \Sigma^\infty S^1\vee\Sigma^\infty S^1 \vee \Sigma^\infty S^2$, which is not equivalent to the coproduct of $I(\bS[S^1])\simeq \Sigma^\infty S^1$ with itself. %
\end{rmk}

If $f:A\to B\in \CAlg$, then $L_{B/A}\in \Sp(\CAlg_\BB)$ by definition. Given \cref{thm-spcalg}, in this situation we redefine $L_{B/A}$ to mean the image of $A\xrightarrow{f}B\xrightarrow{\id}B$ under the composition
\begin{equation}\label{eq-deftaq}
\xymatrix@C+1pc{\CAlg_\AB \ar[r]^-{B\sma_A-} & \CAlg_\BB \ar[r]^-{\Sigma^\infty} & \Sp(\CAlg_\BB) \ar[r]^-{\Omega^\infty \circ \partial I}_-\simeq & \Mod_B.}
\end{equation}
Therefore, by \ref{section-good}.(\ref{item-p1f}), $L_{B/A}$ is equivalently the value of an excisive approximation to $I:\CAlg_\BB\to \Mod_B$ evaluated in $B\sma_A B$. In symbols,
\begin{equation}\label{eq-p1} L_{B/A}\simeq (P_1I)(B\sma_A B).\end{equation}

\subsection{The cotangent complex as a colimit} 
Let $A$ be an $E_\infty$-ring spectrum. The general definition of a cotangent complex also applies to an $E_k$-$A$-algebra $B$. In \cite[7.3.5]{ha} Lurie analyzes this particular case. He denotes by $L^{(k)}_{B/A}$ the resulting \emph{$E_k$-cotangent complex}.

Forgetting structure, every $E_\infty$-$A$-algebra $B$ is an $E_k$-$A$-algebra for every $k\geq 0$. Lurie observes in \cite[7.3.5.6]{ha} that since the $E_\infty$-operad is the colimit of the $E_k$-operads, these $E_k$-cotangent complexes recover the 
cotangent complex as follows:
\[L_{B/A} \simeq  \colim (\xymatrix{L_{B/A}^{(1)} \ar[r] &  L_{B/A}^{(2)} \ar[r] & L_{B/A}^{(3)} \ar[r] & \cdots }).\]
These $E_k$-cotangent complexes admit a different expression which is sometimes computable, as we shall see in this section. That is what we shall use in \cref{section-thom} to compute the cotangent complex of Thom $E_\infty$-algebras.\\

Let $B\in \CAlg$ %
and $C\in \CAlg_\BB$. Since $I$ is left exact (it is a right adjoint) and commutes with sequential colimits (\cref{rmk-nuca}), then by \ref{section-good}.(\ref{item-colim}),
\begin{equation}
\label{eq-colim}
(P_1I)(C) \simeq \colim (\xymatrix@C+1pc{I(C) \ar[r]^-{e_0} & \Omega I(\Sigma C) \ar[r]^-{\Omega e_1} & \Omega^2 I(\Sigma^2C) \ar[r]^-{\Omega^2e_2} & \cdots }).
\end{equation}
Here $e_n:I(\Sigma^nC) \to \Omega I(\Sigma^{n+1} C)$ is the natural map obtained as in \cite[1.4.2.12]{ha}. Explicitly, it is obtained as follows. Write $\Sigma^{n+1}C$ as the pushout of $B\leftarrow \Sigma^n C \to B$ (remember that $B$ is a zero object of $\CAlg_\BB$). %
Apply $I$, then $e_n$ is defined as the universal pullback map $ I(\Sigma^n C)\to \Omega I(\Sigma^{n+1}C)$.

Let $f:A\to B$ be a morphism in $\CAlg$. Applying (\ref{eq-colim}) to $C=B\sma_A B$ we get a quite explicit colimit formula for $L_{B/A}$. But we can be more explicit: we are going to recast  $I(\Sigma^n (B\sma_A B))$ in other terms. \\

Any presentable $\infty$-category $\C$ is tensored over spaces: there is a functor $-\otimes -:\S \times \C \to \C$ which preserves colimits separately in each variable. If $X\in \S$ and $c\in \C$, then \[X\otimes c\simeq \colim(X\xrightarrow{\{c\}} \C)\] where $\{c\}$ denotes the constant functor with value $c$. If $\C$ is moreover pointed, then it is tensored over pointed spaces: there is a functor $-\odot-:\S_*\times \C\to \C$ which preserves colimits separately in each variable. If $(X,x_0)\in \S_*$ and $c\in \C$, then
\begin{equation}\label{eq-odot}X\odot c \simeq \cof_\C(c\simeq *\otimes c \stackrel{x_0 \otimes \id}{\longrightarrow} X\otimes c).
\end{equation}
See \cite[Section 2]{rsv-thom} for more details.

\begin{rmk} \label{sn-odot}The suspension $\Sigma A$ of an object $A$ in a pointed presentable $\infty$-category $\C$ can be expressed as $S^1\odot A$. Indeed, write $S^1=\colim (*\leftarrow S^0 \to *)$ and apply the colimit-preserving functor $-\odot A$. By induction, $\Sigma^nA \simeq S^n\odot A$ for all $n\geq 0$.
\end{rmk}

\begin{notn} Let $\odot_B$ denote the tensor of $\CAlg_\BB$ over $\S_*$. Let $\otimes_A$ denote the tensor of $\CAlg_A$ over $\S$.
\end{notn}

\begin{rmk} \label{rmk-uniaug} If $f:A\to B$ in $\CAlg$ and $(X,x_0)\in \S_*$, then $X\otimes_A B\in \CAlg_\BB$, with unit and augmentation given by \[B\simeq *\otimes_A B \xrightarrow{x_0\otimes \id} X\otimes_A B \xrightarrow{*\otimes \id} *\otimes_A B \simeq B.\]
More generally, if $(B\xrightarrow{g} C\xrightarrow{e} B) \in \CAlg_\BB$, then $X\otimes_A C\in \CAlg_\BB$, with unit and augmentation given by
\[B\simeq *\otimes_A B \xrightarrow{x_0\otimes g} X\otimes_A C \xrightarrow{*\otimes e} *\otimes_A B \simeq B.\]
\end{rmk}

We shall now prove a couple of results about this construction.

\begin{rmk} \label{rmk-cis}
The definition of an adjunction in \cite[5.2]{htt}, which we are implicitly adopting, uses the theory of correspondences. We shall use the result of Cisinski \cite[6.1.23; Footnote, Page 250]{cisinski} which says that Lurie's definition is equivalent to the expected characterization via natural equivalences of mapping spaces $\Map_\C(c,Gd) \simeq \Map_\D(Fc,d)$.
\end{rmk}

In the following lemma, we shall need the following notation: if $F:\C\to \D$ is a functor of $\infty$-categories and $c\in \C$, we denote by $\overline{F}:\C_{c/}\to \D_{F(c)/}$ the induced functor on undercategories. Note that if $F$ has a right adjoint $G$, then by \cref{rmk-cis} we conclude that $\overline{F}$ also has a right adjoint. Indeed, if we are given maps $c\to c'$ and $c\to Gd$ in $\C$, then the natural equivalence
\[\Map_\C(c',Gd) \simeq \Map_\D(Fc',d)\]
restricts to a natural equivalence
\[\Map_{\C_{c/}}(c',Gd) \simeq \Map_{\D_{Fc/}}(Fc',d).\]
Explicitly, the right adjoint of $\overline{F}$ takes an object $g:Fc \to d$ to the composition $c \to GFc \xrightarrow{Gg} Gd$.  Therefore, if the $\infty$-categories are presentable and $F$ preserves colimits, then $\overline{F}$ also preserves colimits.  %

We will now consider bifunctors that preserve colimits separately in each variable and analyze in which way this property passes on to undercategories.

\begin{lemma} \label{lemma-bifunctor}
 Let $F: \C \times \D \to \E$ be a functor of presentable $\infty$-categories that preserves colimits separately in each variable. 
 Let $(c,d)\in\C \times \D$. Consider the induced functor
\[\overline{F}:\C_{c/}\times \D_{d/}\to \E_{F(c,d)/}.\]
For a given $h:c \to c'$ in $\C$, the functor
$$\overline{F}(h, - ) : \D_{d/} \to \E_{F(c,d)/}$$ 
preserves colimits if and only if $F(h,\id_d): F(c,d) \to F(c',d)$ is an equivalence.
\end{lemma}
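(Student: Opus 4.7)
The plan is to factor $\overline{F}(h,-)$ as a composition of two functors, each of whose colimit behavior is transparent, and then to reduce the biconditional to inspection of a single simple case.

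On objects, $\overline{F}(h,d\to d')$ is the composition $F(c,d) \xrightarrow{F(h,\id_d)} F(c',d) \xrightarrow{F(\id_{c'},d\to d')} F(c',d')$, so that $\overline{F}(h,-)$ ought to factor as
\[\D_{d/} \xrightarrow{\overline{F(c',-)}} \E_{F(c',d)/} \xrightarrow{F(h,\id_d)^*} \E_{F(c,d)/},\]
where the second functor is precomposition by $F(h,\id_d)$. To upgrade this objectwise description to a genuine factorization of functors of $\infty$-categories, I would start from the natural transformation of functors $\D \to \E$ induced by $h$, namely $F(h,-): F(c,-) \Rightarrow F(c',-)$, and invoke the functoriality of slice $\infty$-categories under natural transformations to build the diagram above.

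For the ``only if'' direction I specialize colimit-preservation to the empty colimit. The initial object of $\D_{d/}$ is $\id_d$, whose image under $\overline{F}(h,-)$ is the object $F(h,\id_d): F(c,d)\to F(c',d)$ of $\E_{F(c,d)/}$; the initial object of $\E_{F(c,d)/}$ is $\id_{F(c,d)}$. An equivalence of objects in $\E_{F(c,d)/}$ between these is precisely an inverse of $F(h,\id_d)$ in $\E$, so $F(h,\id_d)$ must be an equivalence.

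For the ``if'' direction, assume $F(h,\id_d)$ is an equivalence. Then $F(h,\id_d)^*$ is an equivalence of $\infty$-categories and hence preserves all colimits. On the other hand $F(c',-): \D \to \E$ preserves colimits by hypothesis on $F$, so by the remark immediately preceding the lemma the induced functor $\overline{F(c',-)}$ on undercategories also preserves colimits. Composing the two factors gives the conclusion. The main obstacle I anticipate is the $\infty$-categorical bookkeeping to promote the objectwise factorization to an actual factorization of functors, together with the identification of initial objects of undercategories; these are routine but must be handled carefully via the coherence of the slice construction rather than hand-waved.
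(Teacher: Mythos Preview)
Your proposal is correct and follows essentially the same route as the paper: factor $\overline{F}(h,-)$ as $\overline{F(c',-)}$ followed by precomposition with $F(h,\id_d)$, use the discussion before the lemma to see that the first factor always preserves colimits, and reduce the question to whether the second factor does; the ``only if'' direction then comes from looking at what happens to the initial object. The only cosmetic difference is that the paper first reduces both directions to the colimit-preservation of the precomposition functor and then tests that on initial objects, whereas you test $\overline{F}(h,-)$ itself on initial objects directly; your worry about making the factorization honest at the $\infty$-categorical level is legitimate, and the paper handles it at exactly the same level of informality you do.
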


There is an analogous result in the other variable, starting from an arrow $d\to d'\in \D$, but we shall not be needing it.

\begin{proof}
We can factor the functor $\overline{F}(h, - )$ as the composition
$$ \D_{d/} \xrightarrow{ \overline{F(c',-)} } \E_{F(c',d)/} \xrightarrow{F(h,\id_d)_* } \E_{F(c,d)/}.$$
The first functor preserves colimits %
by the discussion above applied to $F(c',-): \D \to \E$, which preserves colimits by hypothesis. Therefore $\overline{F}(h,-)$ preserves colimits if and only if $F(h,\id_d)_*$ preserves colimits. 

If $F(h,\id_d)_*$ preserves colimits, then it preserves initial objects, which forces $F(h,\id_d)$ to be an equivalence. The converse is obvious.
\end{proof}

\begin{rmk} The functor $\overline{F}$ of \cref{lemma-bifunctor} always preserves colimits indexed by weakly contractible simplicial sets separately in each variable, by \cite[4.4.2.9]{htt}. We shall not be using this fact, though.
\end{rmk}

\begin{prop}  \label{prop-extendtensor}
\begin{enumerate}
\item The functor $- \otimes_A -: \S \times \CAlg_A \to \CAlg_A$
extends to a functor 
\begin{equation}\label{eq-funer}
- \otimes_A -: \S_* \times \CAlg_\BB \to \CAlg_\BB
\end{equation}
in the sense that the following diagram commutes, where the two vertical maps are forgetful functors:
\[\xymatrix{
\S_* \times \CAlg_\BB \ar[r]^-{-\otimes_A -} \ar[d] & \CAlg_\BB \ar[d] \\ \S \times \CAlg_A \ar[r]_-{-\otimes_A-} & \CAlg_A.}
\]
The functor (\ref{eq-funer}) takes $(X,C)$ to $X\otimes_A C$ with unit and augmentation given as in \cref{rmk-uniaug}.
 
\item  %
Letting the second variable of (\ref{eq-funer}) be of fixed value  $(B \to C \to B) \in \CAlg_\BB$, 
 the restricted functor%
 $$- \otimes_A C: \S_* \to \CAlg_\BB$$ 
 preserves colimits if and only if the unit %
 $B \to C$ %
 is an equivalence.

\item Letting the second variable of (\ref{eq-funer}) be of fixed value $B \xrightarrow{\id}B \xrightarrow{\id} B$, the restricted functor \[-\otimes_A B: \S_*\to \CAlg_\BB\] is equivalent to $-\odot_B (B\sma_A B)$, where $B\sma_A B$ denotes the object $B \xrightarrow{\iota_0} B\sma_AB \xrightarrow{\mu} B$ of $\CAlg_\BB$.\footnote{The inclusion $\iota_0$ can be replaced by $\iota_1$: up to composing with the symmetry $B\sma_A B\xrightarrow{\sim} B\sma_A B$, the two choices are equivalent.}
 
 \end{enumerate}
\end{prop}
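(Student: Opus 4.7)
My plan is to build the bifunctor of (1) by successively applying overcategory and undercategory functoriality to Lurie's tensor $F:=-\otimes_A-:\S\times\CAlg_A\to\CAlg_A$, and then to deduce (2) and (3) from \cref{lemma-bifunctor} and from the universal property of $\S_*$, respectively.

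For part (1), I would start by noting that $F(*,B)=B$, which allows overcategory functoriality with respect to $*\in\S$ and $B\in\CAlg_A$ to produce a bifunctor $F^/:\S\times(\CAlg_A)_{/B}\to(\CAlg_A)_{/B}$ sending $(X,e:C\to B)$ to $X\otimes_A C\xrightarrow{p_X\otimes e}B$ (using $\S_{/*}\simeq\S$). Since overcategory colimits are computed in the ambient category, this new bifunctor still preserves colimits separately. I would then observe that $F^/(*,\id_B)=\id_B$, and apply undercategory functoriality to obtain
\[\overline{F^/}:\S_*\times\CAlg_\BB\to\CAlg_\BB.\]
Unwinding, $\overline{F^/}(X,C)$ has underlying $A$-algebra $X\otimes_A C$, with unit $B\simeq *\otimes_A B\xrightarrow{x_0\otimes\eta}X\otimes_A C$ and augmentation $X\otimes_A C\xrightarrow{p_X\otimes e}*\otimes_A B\simeq B$, exactly as in \cref{rmk-uniaug}; compatibility with the forgetful functors is immediate from the construction.

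For part (2), I would apply \cref{lemma-bifunctor} in the analogous form where the second variable varies (which holds by the same proof, swapping the roles of $\C$ and $\D$) to the bifunctor $F^/$. Taking $c=*\in\S$, $d=\id_B\in(\CAlg_A)_{/B}$, and a general morphism $k:\id_B\to(C,e)$ in $(\CAlg_A)_{/B}$, which is precisely the datum of an object $C\in\CAlg_\BB$ with unit $\eta:B\to C$, the lemma gives that $-\otimes_A C=\overline{F^/}(-,k):\S_*\to\CAlg_\BB$ preserves colimits if and only if $F^/(\id_*,k)=\eta$ is an equivalence in $(\CAlg_A)_{/B}$, equivalently in $\CAlg_A$.

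For part (3), I would observe that both functors in question are colimit-preserving: the left-hand side by (2) applied with $C=B$ (whose unit $\id_B$ is trivially an equivalence), and the right-hand side by the very definition of the $\S_*$-tensoring $\odot_B$. Since $\S_*$ is the free pointed presentable $\infty$-category on $S^0$, evaluation at $S^0$ yields an equivalence $\FunL(\S_*,\D)\simeq\D$ for any pointed presentable $\D$, so it suffices to check that the two functors agree at $S^0$. The construction of part (1) with $X=S^0$ (basepoint $+$) and $C=B$ yields the object $(B\xrightarrow{\iota_0}B\sma_A B\xrightarrow{\mu}B)\in\CAlg_\BB$ on the left-hand side, while on the right-hand side the tensor unit axiom gives $S^0\odot_B(B\sma_A B)\simeq B\sma_A B$ with the very same augmented structure. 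Hence the two functors coincide. The most delicate step will be the rigorous construction in (1), which requires carefully chaining over- and undercategory functoriality within the theory of $\infty$-categories; once this is in place, parts (2) and (3) are immediate applications of \cref{lemma-bifunctor} and standard universal properties.
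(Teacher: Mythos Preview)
Your proposal is correct and follows essentially the same route as the paper: first pass to overcategories to obtain $\S\times(\CAlg_A)_{/B}\to(\CAlg_A)_{/B}$, then to undercategories at $(*,\id_B)$ to land in $\S_*\times\CAlg_\BB\to\CAlg_\BB$; deduce (2) from \cref{lemma-bifunctor}; and deduce (3) from the universal property of $\S_*$ by evaluating both colimit-preserving functors at $S^0$. The only cosmetic difference is that you are explicit about invoking the symmetric form of \cref{lemma-bifunctor} for (2), whereas the paper applies the lemma directly (despite having remarked that the other-variable version would not be needed).
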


\begin{proof}
\begin{enumerate}
\item
 Since the tensor is a colimit and colimits in overcategories are created in the original $\infty$-category \cite[1.2.13.8]{htt}, the functor $-\otimes_A-:\S\times \CAlg_A\to \CAlg_A$ begets a functor 
\begin{equation}
\label{eq-tensim} -\otimes_A-:\S\times (\CAlg_A)_{/B} \to (\CAlg_A)_{/B}\end{equation}
which extends the original one and preserves colimits separately in each variable.

Now, note as in \cref{rmk-sip} that
\[(\CAlg_A)_{/B} \simeq (\CAlg_{A/})_{/f} \simeq \CAlg_\AB.\]
We now consider $(*,A\to B\to B)\in \S_*\times \CAlg_\AB$ and consider the functor induced by (\ref{eq-tensim}) in undercategories. This gives the result, since $A\to B\to B$ is a terminal object of $\CAlg_\AB$, and by \cref{rmk-sip} $(\CAlg_\AB)_*\simeq \CAlg_\BB$.

The functor we just obtained acts on objects as expected, by construction.

  \item 
This follows from %
\cref{lemma-bifunctor}, since the condition there amounts in this case to the map $* \otimes_A B \to * \otimes_A C$ being an equivalence.
  \item
To prove $-\otimes_A B$ and $-\odot_B (B\sma_A B)$ are equivalent, it suffices to see that they send $S^0$ to equivalent objects. Indeed, they are colimit-preserving functors from $\S_*$ to a pointed presentable $\infty$-category, but $\S_*$ is freely generated under colimits by $S^0$ in pointed presentable $\infty$-categories \cite[2.29]{rsv-thom}.

Now, indeed both functors send $S^0$ to $B\sma_A B$, and the proof is finished.\qedhere 
   \end{enumerate}
\end{proof}

\begin{lemma}\label{lemma-ij} Let $B \xrightarrow{ \ u \ } C \xrightarrow{ \ c \ } B$ be an object in $\CAlg_\BB$. 
Then $I(C)$ is naturally equivalent to the cofiber of $u$ in $\Mod_B$, i.e.
\[I(C)=\fib_{\Mod_B}(c:C\to B) \simeq \cofib_{\Mod_B}(u:B\to C) \eqqcolon J(C).\]
\begin{proof}Consider the following commutative diagram in $\Mod_B$:
\[\xymatrix@C-.2cm@R-.2cm{0 \ar@/^.6cm/[rr]^-\id \ar[r] \ar[d] & B \ar@/^.7cm/[dd]|\hole^(.7)\id \ar[d]_-u \ar[r] & 0 \ar[d] \\ I(C) \ar[r] \ar[d] & C \ar[r] \ar[d]_-c  & J(C) \\ 0 \ar[r] & B.}\]
The bottom left square and the big rectangle formed by the two squares on the left are pullbacks, so the square on the top left is a pullback \cite[4.4.2.1]{htt}. It is therefore a pushout, by stability of $\Mod_B$. The square on the top right is also a pushout, hence the big rectangle formed by the two squares on top is a pushout \cite[Dual of 4.4.2.1]{htt}, proving the result. 
\end{proof}
\end{lemma}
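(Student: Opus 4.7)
My plan is to exploit the fact that the augmentation $c$ is a retraction of the unit $u$ (since $c \circ u = \id_B$ by the definition of $\CAlg_\BB$), combined with stability of $\Mod_B$, which forces pullback squares and pushout squares to coincide. The idea is to build a $3 \times 2$ diagram in $\Mod_B$ with columns $0 \to I(C) \to 0$, $B \xrightarrow{u} C \xrightarrow{c} B$, and rows $0 \to B$, $I(C) \to C$, $0 \to B$, so that the bottom square is the defining pullback for $I(C) = \fib(c)$, while the vertical outer rectangle is
\[
\begin{tikzcd}[row sep=.5cm, column sep=.5cm]
0 \ar[r] \ar[d] & B \ar[d, "\id"] \\
0 \ar[r] & B
\end{tikzcd}
\]
which is trivially a pullback.

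By the pasting lemma for pullbacks \cite[4.4.2.1]{htt}, the top square
\[
\begin{tikzcd}[row sep=.5cm, column sep=.5cm]
0 \ar[r] \ar[d] & B \ar[d, "u"] \\
I(C) \ar[r] & C
\end{tikzcd}
\]
is then a pullback, and by stability it is also a pushout. Next I would extend this square to the right by the defining pushout $B \to C \to J(C)$ for the cofiber of $u$, producing a horizontal rectangle with top row $0 \to B \to 0$ and bottom row $I(C) \to C \to J(C)$.

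The dual pasting lemma for pushouts then shows the outer horizontal rectangle is a pushout, but its outer square is
\[
\begin{tikzcd}[row sep=.5cm, column sep=.5cm]
0 \ar[r] \ar[d] & 0 \ar[d] \\
I(C) \ar[r] & J(C),
\end{tikzcd}
\]
from which I read off $J(C) \simeq I(C)$. Naturality in $C$ follows by naturality of each pullback/pushout square involved.

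There is no real obstacle here; the only mildly delicate point is setting up the $3 \times 2$ diagram so that all three relevant squares (the two horizontal ones and the vertical outer rectangle) line up to apply the pasting lemmas in the correct order. I would be careful to confirm that $c \circ u = \id_B$ holds as an equivalence in $\Mod_B$ because $(B \xrightarrow{u} C \xrightarrow{c} B)$ is an object of $\CAlg_\BB = (\CAlg_{/B})_{\id_B/}$, so that the outer vertical rectangle really is the identity square and thereby a pullback.
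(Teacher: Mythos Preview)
Your proposal is correct and is essentially identical to the paper's proof: the paper draws exactly the same combined diagram (the left $2\times 2$ block for the pullback pasting, then the top $1\times 2$ block for the pushout pasting) and applies the same two invocations of the pasting lemma together with stability of $\Mod_B$. The only difference is cosmetic: the paper presents the full diagram at once, whereas you describe assembling it in two stages.
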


We adopt the notation of \cite{kriz} or \cite[Page 164]{schlichtkrull-higher}:

\begin{notn} \label{not-otilde} Let $X\in \S_*$ and $f:A\to B$ in $\CAlg$. By the previous lemma, there is an equivalence $I(X\otimes_A B)\simeq J(X\otimes_A B)$ in $\Mod_B$ natural in $X$ and $B$: we denote the common value by $X\otilde_A B$.
\end{notn}

From \cref{prop-extendtensor} and \cref{lemma-ij}, we deduce:

\begin{cor} \label{cor-xodot} Let $f:A\to B$ in $\CAlg$ and $X\in \S_*$. There is an equivalence of $B$-modules
\[I(X\odot_B (B\sma_A B)) \simeq X\otilde_A B\]
natural in $X$.
\end{cor}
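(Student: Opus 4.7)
The proof is a direct combination of the two results immediately preceding the corollary. The plan is as follows.

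First, I would invoke \cref{prop-extendtensor}(3), which provides an equivalence of functors $\S_* \to \CAlg_\BB$ between $-\otimes_A B$ and $-\odot_B (B\sma_A B)$. In particular, evaluating at $X$ yields a natural equivalence $X\otimes_A B \simeq X\odot_B(B\sma_A B)$ in $\CAlg_\BB$.

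Next, I would apply the augmentation ideal functor $I:\CAlg_\BB\to \Mod_B$ to this equivalence to get a natural equivalence of $B$-modules
\[
I(X\otimes_A B) \simeq I(X\odot_B(B\sma_A B)).
\]
Finally, by \cref{lemma-ij} (or equivalently, by the very definition given in \cref{not-otilde}), the left-hand side is naturally equivalent to $X\otilde_A B$, which yields the claimed equivalence.

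There is no real obstacle here: both ingredients (the identification of the two tensor-like functors, and the identification of the augmentation ideal with the common value used to define $\otilde_A$) have already been established, and naturality in $X$ is inherited directly from the naturality statements in \cref{prop-extendtensor}(3) and \cref{lemma-ij}.
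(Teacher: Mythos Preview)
Your proposal is correct and matches the paper's approach exactly: the paper simply states that the corollary follows from \cref{prop-extendtensor} and \cref{lemma-ij}, which is precisely the combination you spell out.
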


We can now recast (\ref{eq-colim}) in a different form. 
Define arrows $e_n': S^n\otilde_A B \to \Omega (S^{n+1}\otilde_A B)$ as follows. Start by presenting $S^{n+1}\in \S_*$ as the pushout of $*\leftarrow S^n\to *$. Apply $-\otimes_A B:\S_*\to \CAlg_\BB$ to it, and then apply $I$. Now $e_n'$ is the universal pullback map $S^n\otilde_A B \to \Omega (S^{n+1}\otilde_A B)$.

\begin{prop} \label{prop-colim} Let $f:A\to B$ in $\CAlg$. There is an equivalence of $B$-modules 
\[L_{B/A}\simeq \colim_{\Mod_B} (\xymatrix@C+1pc{S^0\otilde_A B \ar[r]^-{e_0'} & \Omega (S^1\otilde_A B) \ar[r]^-{\Omega e_1'} & \Omega^2 (S^2\otilde_A B \ar[r]^-{\Omega^2 e_2'}) & \cdots }) \]
where $\Omega$ denotes the loop functor in $\Mod_B$.
\begin{proof} We use the characterization $L_{B/A}\simeq (P_1I)(B\sma_A B)$ from (\ref{eq-p1}). Consider the equivalence (\ref{eq-colim}) with $C=B\sma_A B$. By \cref{sn-odot} and \cref{cor-xodot}, we have
\[I(\Sigma^n(B\sma_A B)) \simeq I(S^n\odot_B (B\sma_A B)) \simeq S^n\otilde_A B.\]
The maps $e_n$ from (\ref{eq-colim}) and the maps $e_n'$ are defined in an analogous fashion, so the natural equivalences above commute with them.
\end{proof}
\end{prop}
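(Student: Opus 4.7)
The plan is to chain together the expressions for the cotangent complex that have already been established with the identifications relating the operations $-\odot_B-$, $-\otimes_A-$, and the functor $I$.

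First I would invoke (\ref{eq-p1}), which presents $L_{B/A}$ as the excisive approximation of $I:\CAlg_\BB\to \Mod_B$ evaluated at the object $B\sma_A B \in \CAlg_\BB$ (viewed as an augmented $E_\infty$-$B$-algebra with unit $\iota_0$ and augmentation $\mu$ as in \cref{prop-extendtensor}(3)). Next I would apply the sequential-colimit formula (\ref{eq-colim}) with $C=B\sma_A B$, which gives
\[L_{B/A}\simeq \colim_{\Mod_B} \bigl(I(B\sma_AB)\to \Omega I(\Sigma(B\sma_AB))\to \Omega^2 I(\Sigma^2(B\sma_AB))\to \cdots\bigr),\]
with connecting maps the $\Omega^n e_n$ obtained from presenting $\Sigma^{n+1}(B\sma_A B)$ as a pushout $B\leftarrow \Sigma^n(B\sma_A B)\to B$ in $\CAlg_\BB$, applying $I$, and taking the induced map to the pullback.

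Next I would identify the terms of this colimit with $S^n\otilde_A B$. By \cref{sn-odot} applied in the pointed presentable $\infty$-category $\CAlg_\BB$, we have $\Sigma^n(B\sma_A B)\simeq S^n\odot_B (B\sma_A B)$. Then \cref{cor-xodot} yields $I(S^n\odot_B (B\sma_A B))\simeq S^n\otilde_A B$ naturally in $S^n\in \S_*$.

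The main obstacle is verifying that these natural equivalences carry the connecting maps $\Omega^n e_n$ of (\ref{eq-colim}) to the maps $\Omega^n e_n'$ of the statement. Both are constructed by the same recipe: present $S^{n+1}$ as the pushout $*\leftarrow S^n\to *$ in $\S_*$, transport this pushout into $\CAlg_\BB$ (in one case via $-\odot_B (B\sma_A B)$, which preserves colimits, and in the other via $-\otimes_A B$, which preserves colimits by \cref{prop-extendtensor}(2) since $B\xrightarrow{\id} B$ is the unit), apply $I$ (a right adjoint, hence left exact), and take the universal pullback map. Since $-\otimes_A B$ and $-\odot_B(B\sma_A B)$ are identified as functors $\S_*\to \CAlg_\BB$ by \cref{prop-extendtensor}(3), both constructions yield the same map under the equivalence $I(S^n\odot_B(B\sma_A B))\simeq S^n\otilde_A B$, and passing to the colimit gives the desired equivalence.
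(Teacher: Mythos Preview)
Your proof is correct and follows essentially the same approach as the paper: invoke (\ref{eq-p1}), specialize (\ref{eq-colim}) to $C=B\sma_A B$, and identify the terms via \cref{sn-odot} and \cref{cor-xodot}. Your treatment of the connecting maps is in fact more explicit than the paper's, which simply notes that $e_n$ and $e_n'$ are ``defined in an analogous fashion''; your invocation of \cref{prop-extendtensor}(2)--(3) to justify this identification is a welcome elaboration.
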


\begin{rmk} The previous proposition was known to the experts (it is mentioned e.g in \cite[Page 164]{schlichtkrull-higher}), but we do not think a complete derivation had been spelled out in the literature before.
\end{rmk}

Finally, let us make the connection between the $\otilde$ construction and the $E_k$-cotangent complex mentioned at the beginning of the subsection.

\begin{rmk} Let $B$ be an $E_\infty$-$A$-algebra. Forgetting structure, we may consider $B$ as an $E_k$-$A$-algebra, for all $k\geq 0$, %
and thus we may form its $E_k$-cotangent complex $L^{(k)}_{B/A}$ \cite[7.3.5]{ha}.  
Lurie \cite[7.3.5.1/3]{ha} proved that for each $k\geq 1$ there is a fiber sequence of $B$-modules
\[\xymatrix{
L_{B/A}^{(k)} \ar[r] & \Omega^{k-1}(S^{k-1} \otimes_A B) \ar[rr]^-{\Omega^{k-1}(*\otimes \id)} && \Omega^{k-1} B,} \]
where $*:S^{k-1}\to *$ denotes the unique map. Since $*\otimes\id:S^{k-1}\otimes_A B\to B$ is the augmentation of $S^{k-1}\otimes_A B$ and loops commute with pullbacks, this identifies $L_{B/A}^{(k)}$ with $\Omega^{k-1}(S^{k-1}\otilde_A B)$ (see also \cite[1.3]{basterra-mandell-En}), so by \cref{prop-colim} we obtain an equivalence of $B$-modules
\[L_{B/A} \simeq  \colim (\xymatrix{L_{B/A}^{(1)} \ar[r] &  L_{B/A}^{(2)} \ar[r] & L_{B/A}^{(3)} \ar[r] & \cdots })\]
recovering \cite[7.3.5.6]{ha}.
\end{rmk}

\subsection{The cotangent complex via indecomposables}

The first published definition of the cotangent complex was established in the context of model categories using the indecomposables functor \cite{basterra}.
The goal of this subsection is to prove that this definition of the cotangent complex is equivalent to the definition adopted in (\ref{eq-deftaq}). We are not aware of a discussion of the approach using indecomposables in the $\infty$-categorical setting.

The content of this subsection will not be used in the sequel: the reader interested in Thom spectra should feel free to jump ahead to \cref{section-thom}.

\begin{defn} Let $B\in \CAlg$. We denote by \[Q:\nuca_B\to \Mod_B\] the \emph{indecomposables} functor that takes $N$ to the cofiber in $\Mod_B$ of the multiplication, i.e. to the pushout
\[\begin{tikzcd} [row sep=.6cm, column sep=.6cm]
N \sma_B N \arrow[r,"\mu"] \arrow[d] \arrow[dr, phantom, "\ulcorner", very near end]
& N \arrow[d] \\
0 \arrow[r] 
& Q(N)
\end{tikzcd}
\]
in $\Mod_B$. %
\end{defn}

The functor $Q$ is left adjoint to the functor which takes a module $M$ and endows it with a zero multiplication map. More precisely:

\begin{lemma} \label{lemma-Q}\begin{enumerate}
\item The functor $Q$ is left adjoint to a functor $Z:\Mod_B\to \nuca_B$ such that $UZ\simeq \id_{\Mod_B}$, and for each $M\in \Mod_B$ the multiplication map $ZM\sma_B ZM \to ZM$ is zero. 
\item \label{lemma-Q-id} $Q\circ F\simeq \id_{\Mod_B}$, where $F:\Mod_B\to \nuca_B$ is the free functor.
\item There exists a unique functor $Z:\Mod_B\to \nuca_B$ such that $UZ\simeq \id_{\Mod_B}$, up to equivalence.
\end{enumerate}
\end{lemma}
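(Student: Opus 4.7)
I would prove (1) by constructing $Z$ via the spectrum equivalence of \cref{thm-spcalg}, then deduce (2) and (3) as formal consequences.

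For (1), I define $Z : \Mod_B \to \nuca_B$ as the composition
\[ \Mod_B \xrightarrow{E^{-1}} \Sp(\CAlg_\BB) \xrightarrow{\Omega^\infty} \CAlg_\BB \xrightarrow{I_0} \nuca_B, \]
where $E := \Omega^\infty \circ \partial I$ is the equivalence of \cref{thm-spcalg} and $I_0$ is the equivalence of \cref{rmk-nuca}. The adjunction $\Sigma^\infty \dashv \Omega^\infty$ for $\CAlg_\BB$, transported through the equivalences $E$ and $I_0$, yields an adjunction $Q' : \nuca_B \rightleftarrows \Mod_B : Z$ with left adjoint $Q' := E \circ \Sigma^\infty \circ I_0^{-1}$. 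The identity $UZ \simeq \id_{\Mod_B}$ follows from the commutation $I \circ \Omega^\infty \simeq E$ of \ref{section-good}.(\ref{item-partialf}) (using that $I$, as a right adjoint, is left exact): indeed, $UZ = I \circ \Omega^\infty \circ E^{-1} \simeq E \circ E^{-1} \simeq \id$. The vanishing of the multiplication on $ZM$ comes from observing that $Z(M)$ corresponds under $I_0$ to $\Omega^\infty(E^{-1}(M)) \in \CAlg_\BB$, an infinite loop object in augmented $B$-algebras; the $\infty$-categorical Beck-type theorem of \cref{thm-spcalg} identifies such objects with split square-zero extensions $B \vee M$ whose restricted multiplication $M \sma_B M \to M$ is null.

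The crux of (1) is then to identify $Q'$ with the explicit cofiber-of-multiplication functor $Q$. By \ref{section-good}.(\ref{item-p1f}), $Q' \simeq P_1 I \circ I_0^{-1}$, and by the colimit presentation (\ref{eq-colim}) applied to $C = I_0^{-1}(N) \simeq B \vee N$, we have $Q'(N) \simeq \colim_n \Omega^n I(\Sigma^n(B \vee N))$. The main technical step is to show $\Omega I(\Sigma(B \vee N)) \simeq Q(N)$ and that the tower stabilizes thereafter. This requires an explicit analysis of the pushout $\Sigma(B \vee N) \simeq B \sqcup_{B \vee N} B$ in $\CAlg_\BB$, using that coproducts in $\CAlg_\BB$ correspond, via $I_0$, to $(N_1, N_2) \mapsto N_1 \vee N_2 \vee (N_1 \sma_B N_2)$ on $\nuca_B$. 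I expect this pushout chase to be the main technical obstacle.

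Parts (2) and (3) follow formally. For (2), $Q \circ F$ is a composition of left adjoints, hence left adjoint to $U \circ Z \simeq \id_{\Mod_B}$; since the identity is its own left adjoint (and left adjoints are unique up to equivalence), $Q \circ F \simeq \id_{\Mod_B}$. For (3), any functor $Z' : \Mod_B \to \nuca_B$ with $UZ' \simeq \id$ corresponds under $I_0$ to a section $R' : \Mod_B \to \CAlg_\BB$ of the augmentation ideal functor $I$. I would apply \cite[1.4.2.24]{ha} (whose use is suggested by Lurie in the acknowledgments) to lift $R'$ uniquely to a functor $\Mod_B \to \Sp(\CAlg_\BB)$, and then invoke the equivalence $E$ to conclude that $R'$---and therefore $Z'$---is determined up to equivalence, yielding $Z' \simeq Z$.
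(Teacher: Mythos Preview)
Your approach to (1) inverts the paper's logical order and rests on a technical claim that is false. The identification $Q' \simeq Q$ you propose (equivalently $P_1 I \simeq Q \circ I_0$) is precisely the content of \cref{thm-three}, which the paper proves \emph{using} this lemma, specifically part (2). The paper's argument for (1) is elementary and avoids stabilization entirely: it sets $ZM \coloneqq I_0(B \oplus M)$ using the trivial square-zero extension, notes that its multiplication is null, so that $QZM$ is the cofiber of a zero map $M \sma_B M \to M$, i.e.\ $M$ itself; this exhibits a terminal object of the comma $\infty$-category $(Q \downarrow M)$, and hence $Q$ admits a right adjoint by \cite[5.2.4.2]{htt}. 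Your proposed shortcut, that $\Omega I(\Sigma(B \vee N)) \simeq Q(N)$ so the tower stabilizes after one step, fails already for $N = F(M)$: since $F$ is a left adjoint one has $\Sigma_{\nuca_B} F(M) \simeq F(\Sigma M)$, so
\[
\Omega\, U\!\left(\Sigma_{\nuca_B} F(M)\right) \;\simeq\; \bigvee_{n \geq 1} \Omega\bigl((\Sigma M)^{\sma_B n}\bigr)_{\Sigma_n},
\]
and the $n = 2$ summand is typically nonzero (take $M = B$), whereas $QF(M) \simeq M$.

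Your approach to (3) also has a gap. The acknowledgment's reference to \cite[1.4.2.24]{ha} concerns \cref{Proposition}, not this lemma. More to the point, the condition $UZ' \simeq \id$ gives a section $R' = I_0^{-1} Z'$ of $I$, but supplies none of the delooping data required to lift $R'$ along $\Omega^\infty$ via that result; there is no a priori reason an arbitrary section should factor through $\Sp(\CAlg_\BB)$. The paper instead argues via monadicity of $U$: for free nucas one has
\[
\Map_{\nuca_B}(FM', Z'M) \simeq \Map_{\Mod_B}(M', UZ'M) \simeq \Map_{\Mod_B}(QFM', M),
\]
the last equivalence by (2); resolving an arbitrary $N \in \nuca_B$ by its bar construction in free nucas and passing to geometric realizations yields $\Map_{\nuca_B}(N, Z'M) \simeq \Map_{\Mod_B}(QN, M)$, so $Z'$ is right adjoint to $Q$ and hence $Z' \simeq Z$ by uniqueness of adjoints. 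Your treatment of (2) is correct and matches the paper's.
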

Here $U:\nuca_B\to \Mod_B$ denotes the forgetful functor.

\begin{proof}
\begin{enumerate}
\item We will use a criterion for adjointness from %
\cite[5.2.4.2]{htt}: $Q$ admits a right adjoint if and only if for every $M\in \Mod_B$, the comma $\infty$-category $(Q\downarrow M)$, defined as the pullback
\[\begin{tikzcd} [row sep=.6cm, column sep=.6cm]
(Q\downarrow M) \arrow[r] \arrow[d] \arrow[dr, phantom, "\lrcorner", very near start]
& \nuca_B \arrow[d] \\
(\Mod_B)_{/M} \arrow[r] 
& \Mod_B
\end{tikzcd}\]
has a terminal object. Fix $M\in \Mod_B$. It suffices to see that there exists a $B$-nuca $ZM$ such that $M\simeq QZM$, for then $QZM\simeq M\xrightarrow{\id} M$ is a terminal object of $(Q\downarrow M)$.

To see this, first recall from \cref{rmk-nuca} that the augmentation ideal functor $I:\CAlg_\BB \to \Mod_B$ factors via an equivalence $I_0:\CAlg_\BB \to \nuca_B$ followed by the forgetful functor. Now consider the trivial square-zero extension $B\oplus M$ \cite[7.3.4.16]{ha}. This is an augmented $E_\infty$-$B$-algebra such that the multiplication map of $I_0(B\oplus M)$ is zero. Define $ZM$ to be $I_0(B\oplus M)$, so clearly $UZM\simeq M$. By definition, $QZM$ is the cofiber in $\Mod_B$ of the multiplication map $ZM \sma_B ZM\to ZM$ which is zero, so $QZM\simeq M$ as required.

\item $Q\circ F$ is the left adjoint to $U\circ Z\simeq \id_{\Mod_B}$, so it is equivalent to the identity.
\item %
The existence of such a $Z$ has just been proven. Now suppose we have a functor $Z':\Mod_B\to \nuca_B$ such that $UZ'\simeq \id_{\Mod_B}$. %
Let $M,M'\in \Mod_B$. We have natural equivalences of spaces
\[\Map_{\nuca_B}(FM',Z'M)\simeq \Map_{\Mod_B}(M',UZ'M) \simeq \Map_{\Mod_B}(QFM',M).\]

Let $N\in \nuca_B$. Note that the free-forgetful adjunction $(F,U)$ is monadic by %
\cite[4.7.3.5]{ha}, since $\nuca_B$ is an $\infty$-category of algebras over an $\infty$-operad in $\Mod_B$ \cite[5.4.4.1]{ha} so the forgetful functor is conservative and preserves geometric realizations of simplicial objects \cite[3.2.2.6/3.2.3.2]{ha}. %
Therefore, by \cite[4.7.3.14/15]{ha}, there exists a simplicial object $N_\bullet$ in $\nuca_B$ which depends functorially on $N$ and satisfies that $\colim(N_\bullet)\simeq N$, $N_\bullet$ is given by free nucas; moreover, $\colim(QN_\bullet)\simeq QN$ by \cite[4.7.2.4]{ha}. Using the above, one obtains a natural equivalence of spaces
\[\Map_{\nuca_B}(N,Z'M)\simeq \Map_{\Mod_B}(QN,M).\]
By \cref{rmk-cis}, this proves that $Z'$ is a right adjoint to $Q$, 
but then by uniqueness of adjoints \cite[5.2.6.2]{htt},  %
we deduce that $Z'\simeq Z$. \qedhere %
\end{enumerate}
\end{proof}

\begin{rmk} Let $K$ denote the composition
\[\Mod_B\xrightarrow{\sim} \Sp(\CAlg_\BB)\xrightarrow{\Omega^\infty} \CAlg_\AB;\] 
here, the first arrow is an inverse to $\Omega^\infty \circ \partial I:\Sp(\CAlg_\BB) \xrightarrow{\sim}\Mod_B$. Note that $K(M)$ is the trivial square-zero extension $B\oplus M$ \cite[7.3.4.16]{ha}. By definition of $K$ and $L_{B/A}$, we get the following equivalence for every $B$-module $M$,%
\[\Map_{\Mod_B}(L_{B/A},M) \simeq \Map_{\CAlg_\AB}(B,B\oplus M).\]
These spaces can be interpreted as the spaces of $A$-linear \emph{derivations} from $B$ into $M$ \cite[7.3]{ha}.%

Now, observe that $K$ is equivalent to the composition \[\Mod_B \xrightarrow{Z} \nuca_B \xrightarrow{B\vee -} \CAlg_\BB\to \CAlg_\AB,\]
the last functor being a forgetful functor. Indeed, since $B\vee -$ is an equivalence with inverse given by $I_0$, we may equivalently verify that $I_0\circ K \simeq Z$. But this follows from Part (2) of the previous lemma. %
\end{rmk}

In the following theorem, we prove how to get the cotangent complex via indecomposables. A model categorical version of this result can be found in \cite[Theorem 4]{basterra-mandell}.

\begin{thm} \label{thm-three} Let $B\in \CAlg$. The following diagram commutes:
\[\xymatrix{
\CAlg_\BB \ar[r]^-{\Sigma^\infty} \ar[d]_-{I_0} & \Sp(\CAlg_\BB) \ar[d]_-\simeq^-{\Omega^\infty \partial I %
} \\ \nuca_B \ar[r]_-Q & \Mod_B.
}\]
\end{thm}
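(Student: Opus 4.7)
The plan is to exhibit both compositions $Q \circ I_0$ and $(\Omega^\infty \circ \partial I) \circ \Sigma^\infty_{\CAlg_\BB}$ as left adjoints between presentable $\infty$-categories, identify their right adjoints, and observe that these two right adjoints are equivalent. Uniqueness of adjoints \cite[5.2.6.2]{htt} will then produce the desired equivalence of left adjoints, which is exactly the commutativity of the diagram.

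The right adjoint of the right-hand vertical composition is straightforward to identify. The functor $\Sigma^\infty_{\CAlg_\BB}$ is by construction left adjoint to $\Omega^\infty_{\CAlg_\BB}$, and $\Omega^\infty \circ \partial I$ is an equivalence by \cref{thm-spcalg}, hence left adjoint to its inverse. Composing, the right adjoint of $(\Omega^\infty \circ \partial I) \circ \Sigma^\infty_{\CAlg_\BB}$ is $\Omega^\infty_{\CAlg_\BB} \circ (\Omega^\infty \circ \partial I)^{-1}$, which is precisely the functor $K$ from the remark preceding the theorem (restricted to target $\CAlg_\BB$).

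For the left-hand composition, $I_0$ is an equivalence by \cref{rmk-nuca} with inverse $B \vee -$, and $Q$ is left adjoint to $Z$ by \cref{lemma-Q}(1). The right adjoint of $Q \circ I_0$ is therefore $(B \vee -) \circ Z$. The remark preceding the theorem already identifies this composition with $K$: indeed, $K(M) \simeq B \oplus M$ has augmentation ideal $M$, so $I \circ K \simeq \id_{\Mod_B}$, and the uniqueness statement of \cref{lemma-Q}(3) forces $I_0 \circ K \simeq Z$, whence $K \simeq (B \vee -) \circ Z$ after composing with the inverse $B \vee -$ of $I_0$.

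Since the two left adjoints $Q \circ I_0$ and $(\Omega^\infty \circ \partial I) \circ \Sigma^\infty_{\CAlg_\BB}$ share the right adjoint $K$, they are equivalent. The only non-formal ingredient is the identification $(B\vee -)\circ Z \simeq K$, which has already been carried out in the preceding remark via the uniqueness of $Z$; beyond that, the argument is pure adjoint-functor bookkeeping, so I do not anticipate any substantive obstacle.
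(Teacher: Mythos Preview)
Your argument is correct and takes a genuinely different route from the paper. The paper identifies $(\Omega^\infty \circ \partial I) \circ \Sigma^\infty \simeq P_1 I$, factors $P_1 I \simeq P_1 U \circ I_0$, and then proves $P_1 U \simeq Q$ via Goodwillie calculus: since $Q$ is already excisive this reduces to $\partial U \simeq \partial Q$, and (using $Q \circ F \simeq \id$) further to showing that $\partial U$ is an equivalence, which follows from monadicity of $U$ together with the vanishing of $\partial \Sym^n$ for $n\geq 2$. Your approach sidesteps the calculus entirely by matching right adjoints, outsourcing the substantive work to \cref{lemma-Q}(3), whose proof also rests on monadicity of the same $(F,U)$ adjunction (via a bar resolution). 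So both routes ultimately lean on monadicity; yours is shorter and more formal for the bare statement, while the paper's makes the Goodwillie-theoretic content (in particular the intermediate identification $P_1 U \simeq Q$) explicit. One small care point: the $K$ of the preceding remark is stated with target $\CAlg_\AB$, but the verification $I_0 \circ K \simeq Z$ there already takes place in $\CAlg_\BB$ (that being the domain of $I_0$), so your ``restriction to target $\CAlg_\BB$'' is purely notational; alternatively, $I \circ K' \simeq \id$ follows even more directly from $\Omega^\infty_{\Mod_B} \circ \partial I \simeq I \circ \Omega^\infty_{\CAlg_\BB}$ without invoking the description $K(M)\simeq B\oplus M$.
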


\begin{proof} Recall from \ref{section-good}.(\ref{item-p1f}) that $P_1I$, the excisive approximation to $I$, is equivalent to $\Omega^\infty \circ \partial I \circ \Sigma^\infty$. Thus, we have to prove that $Q\circ I_0$ is equivalent to $P_1I$. Recall that $I\simeq U \circ I_0$. By \cite[6.1.1.30]{ha}, we have $P_1I \simeq P_1U \circ I_0$. We will now prove that $P_1U \simeq Q$, finishing the proof.

By \ref{section-good}.(\ref{item-p1f}), $P_1U$ is equivalent to $\Omega^\infty_{\Mod_B} \circ \partial U \circ \Sigma^\infty_{\nuca_B}$. Note that $Q$ is excisive, since it preserves pushouts and $\Mod_B$ is stable, so $Q\simeq P_1Q$. Therefore, to prove that $P_1U\simeq Q$ it suffices to prove that $\partial U\simeq \partial Q$, by \ref{section-good}.(\ref{item-p1f}) once more.

We have the free--forgetful adjunction $\adj{\Mod_B}{\nuca_B}{F}{U}$, 
and taking derivatives gives an adjunction $\adj{\Sp(\Mod_B)}{\Sp(\nuca_B)}{\partial F}{\partial U}$ by \cite[6.2.2.15]{ha}. By \cref{lemma-Q}.(\ref{lemma-Q-id}), $Q\circ F \simeq \id_{\Mod_B}$, so by \cite[6.2.1.4/24]{ha} we get $\partial Q \circ \partial F \simeq \id_{\Sp(\Mod_B)}$. To prove that $\partial Q\simeq \partial U$, it suffices to prove that $\partial U$ is an equivalence. Indeed, if it is, then $\partial F \circ \partial U\simeq \id_{\Sp(\nuca_B)}$, so  $\partial Q \simeq \partial Q \circ \partial F \circ \partial U \simeq \partial U$.

To prove that $\partial U$ is an equivalence, we proceed similarly as in \cite[Proof of 7.3.4.7]{ha}: namely, since $U$ is a monadic functor, then by \cite[6.2.2.17]{ha} it suffices to prove that the unit $\id_{\Mod_B} \Rightarrow U\circ F$ induces an equivalence of derivatives. The proof of this is very similar to that of \cite[7.3.4.10]{ha}, only simpler.%

Note that $U\circ F:\Mod_B\to \Mod_B$ is given on objects by $M\mapsto M\vee \bigvee_{n\geq 2} (M^{\sma_B n})_{\Sigma_n}$ and the unit of the $(F,U)$ adjunction includes $M$ into the separate $M$ factor, so by \cite[7.3.4.8]{ha} it suffices to see that the derivative of the functor $\Sym^n:\Mod_B\to \Mod_B, M\mapsto (M^{\sma_B n})_{\Sigma_n}$ is nullhomotopic for $n\geq 2$.

To see this, note that $\Sym^n$ is the composition
\[\xymatrix{\Mod_B \ar[r]^-\diag & \Mod_B^n \ar[rr]^-{-\sma_B  \cdots  \sma_B-} && \Fun(B\Sigma_n,\Mod_B) \ar[r]^-\colim &\Mod_B}\]
where the first functor is the diagonal functor and the second functor takes $(M_1,\dots,M_n)$ to $M_1\sma_B \cdots \sma_B M_n$ together with the action by $\Sigma_n$ which permutes the factors. By \cite[7.3.4.8]{ha} the derivative operator $\partial:\Fun_*(\Mod_B,\Mod_B)\to \Exc(\Mod_B,\Mod_B)$ preserves colimits, %
so the derivative of $\Sym^n$ is the colimit of a functor $B\Sigma_n \to \Exc(\Mod_B,\Mod_B)$ with value $\partial \left( (-)^{\sma_B n}:\Mod_B\to \Mod_B\right)$. Therefore, it suffices to see that the functor $(-)^{\sma_B n}:\Mod_B\to \Mod_B$, $n\geq 2$ has nullhomotopic derivative.

By \cite[6.1.3.12]{ha}, this functor is $n$-reduced, %
so it is 2-reduced, which by definition means that its excisive approximation is nullhomotopic.\footnote{We have just made use of some higher Goodwillie calculus. The definition of an $n$-reduced functor can be found in \cite[6.1.2.1]{ha}, and the fact that any $n$-reduced functor is $(n-1)$-reduced follows from \cite[6.1.1.10/14]{ha}.} By \ref{section-good}.(\ref{item-p1f}), its derivative is nullhomotopic as well, since $\Mod_B$ is stable.
\end{proof}

\begin{rmk}
 The key aspect of the previous proof is the fact that $\partial (U \circ F) \simeq \id_{\Sp(\Mod_B)}$, which is proven in the last paragraph.
 Notice this is equivalent to
 \[P_1(U \circ F) \simeq \id_{\Mod_B},\]
 as $P_1(U \circ F) \simeq \Sigma^\infty \circ \partial (U \circ F) \circ \Omega^\infty$ by \ref{section-good}.(\ref{item-p1f}), 
 and $\Sigma^\infty, \Omega^\infty$ are equivalences since $\Mod_B$ is stable.
 Using the analogy of Goodwillie calculus with classical calculus, we can gain some intuition for this result. %
 Under this analogy, functors correspond to smooth functions of the real line, so the functor $U \circ F$ which maps $M\in \Mod_B$ to $\bigvee_{n\geq 1} (M^{\sma_B n})_{\Sigma_n}$ 
 corresponds to the power series $f(x) = \sum_{n=1}^\infty x^n$. %
The linear approximation at 0 of this function is the identity map $x\mapsto x$.
Continuing with the analogy, linear approximations of functions correspond to $1$-excisive approximations of functors, which 
 provides some intuition for the equivalence $P_1(U \circ F) \simeq \id_{\Mod_B}$. 
\end{rmk}
From \cref{thm-three} we immediately deduce:
\begin{cor} \label{cor-indecomposables}
Let $f:A\to B$ be a map of $E_\infty$-ring spectra. There is an equivalence of $B$-modules \[L_{B/A}\simeq QI_0(B\sma_A B).\]
\end{cor}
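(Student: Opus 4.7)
The corollary follows almost immediately from \cref{thm-three}, so the plan is simply to unwind the definition of $L_{B/A}$ and apply that theorem at the correct object. The key observation is that the right-hand composition in the commutative square of \cref{thm-three}, namely $(\Omega^\infty \circ \partial I) \circ \Sigma^\infty : \CAlg_\BB \to \Mod_B$, is precisely the second half of the composition defining the cotangent complex in (\ref{eq-deftaq}).

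More concretely, recall from (\ref{eq-deftaq}) that $L_{B/A}$ is the image of the object $(A \xrightarrow{f} B \xrightarrow{\id} B)$ of $\CAlg_\AB$ under
\[\CAlg_\AB \xrightarrow{B\sma_A -} \CAlg_\BB \xrightarrow{\Sigma^\infty} \Sp(\CAlg_\BB) \xrightarrow{\Omega^\infty \circ \partial I} \Mod_B.\]
The first functor sends $(A \to B \to B)$ to $B\sma_A B$, regarded as an object of $\CAlg_\BB$ via the unit $B \to B\sma_A B$ (from one of the factors) and the multiplication map $B\sma_A B \to B$. Hence
\[L_{B/A} \simeq (\Omega^\infty \circ \partial I \circ \Sigma^\infty)(B\sma_A B).\]

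Now applying \cref{thm-three} at the object $B\sma_A B \in \CAlg_\BB$, the right-hand side is equivalent to $(Q \circ I_0)(B\sma_A B)$, which gives the desired equivalence $L_{B/A} \simeq QI_0(B\sma_A B)$. There is no real obstacle here, since all the substantive work was done in proving \cref{thm-three}; the only task is to match up the objects and the commutative square correctly.
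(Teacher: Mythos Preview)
Your proposal is correct and matches the paper's approach: the paper states that the corollary follows immediately from \cref{thm-three}, and your argument is exactly the unwinding of (\ref{eq-deftaq}) followed by applying the commutative square of \cref{thm-three} at $B\sma_A B$.
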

This is analogous to what Basterra \cite{basterra} adopted as a definition for $L_{B/A}$ in a model-categorical setting. That was the first published definition. The approach from (\ref{eq-deftaq}) had been used in the preprint \cite{kriz}, albeit formulated in a different language. Only in \cite{basterra-mandell} were the two approaches first proven to be equivalent. %

\section{The cotangent complex of Thom spectra} \label{section-thom}

In this section we will prove the main result, \cref{thm-taqthom}, giving an expression for the cotangent complex of Thom $E_\infty$-algebras.\\

An \emph{$E_\infty$-monoid} is a commutative algebra object in $\S$ in the sense of \cite[2.1.3.1]{ha}, or, equivalently, a commutative monoid object in $\S$ in the sense of \cite[2.4.2.1]{ha} (i.e. a special $\Gamma$-space). They form an $\infty$-category $\Mon(\S)$. If an $E_\infty$-monoid $M$ is grouplike, i.e. if the monoid $\pi_0(M)$ is a group, we say $M$ is an \emph{$E_\infty$-group}. These form an $\infty$-category denoted by $\Grp(\S)$.

Let $R$ be an $E_\infty$-ring spectrum. An $R$-module $M$ is \emph{invertible} if there exists an $R$-module $N$ such that $M\sma_R N\simeq R$. We let $\Pic(R)$ be the \emph{Picard space} of $R$: this is the core (i.e. the maximal subspace) of the full subcategory of $\Mod_R$ on the invertible $R$-modules. 

If $Z$ is a space and $f:Z\to \Pic(R)$ is a map of spaces, the \emph{Thom $R$-module} of $f$ is defined as
\[Mf\coloneqq \colim(\xymatrix{Z\ar[r]^-f & \Pic(R) \ar@{^(->}[r] & \Mod_R}).\]
This defines a functor $M:\S_{/\Pic(R)} \to \Mod_R$.

As noted in \cite[7.7]{abg}, \cite[Section 3]{barthel-antolin}, $\Pic(R)$ is an $E_\infty$-group. If $G$ is an $E_\infty$-monoid and $f:G\to \Pic(R)$ is an $E_\infty$-map, then $Mf$ becomes an $E_\infty$-$R$-algebra \cite[8.1]{abg}, \cite[3.2]{barthel-antolin}. When $f=\{R\}$ is the constant map at $R\in \Pic(R)$, then $Mf\simeq R\sma \bS[G]$ as $E_\infty$-$R$-algebras.%

\subsection{The main result}
We will need the following proposition.

\begin{prop} \label{Proposition} %
Consider $\Sigma^\infty \Omega^\infty:\Sp\to \Sp$. The counit natural transformation
\[\Sigma^\infty \Omega^\infty \Rightarrow \id_{\Sp}\]
exhibits $\id_\Sp$ as the excisive approximation to $\Sigma^\infty\Omega^\infty$.
 
In particular, for a spectrum $X$, there is an equivalence
\begin{equation}
\label{eq-xcolim} X\simeq \colim_{\Sp} (\Sigma^\infty \Omega^\infty X \to \Omega \Sigma^\infty \Omega^\infty \Sigma X  \to \Omega^2\Sigma^\infty \Omega^\infty \Sigma^2 X \to \cdots) \end{equation}
natural in $X$. %
\end{prop}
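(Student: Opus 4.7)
The plan is first to verify the main statement (that the counit exhibits $\id_\Sp$ as the excisive approximation of $\Sigma^\infty\Omega^\infty$) and then to deduce the colimit formula (\ref{eq-xcolim}) from it. Since $\Sp$ is stable, $\id_\Sp$ preserves all limits and colimits; in particular, it is excisive. Writing $T \coloneqq \Sigma^\infty\Omega^\infty : \Sp \to \Sp$, the universal property of the excisive approximation factors the counit $\eta : T \Rightarrow \id_\Sp$ through a canonical natural transformation $v : P_1 T \Rightarrow \id_\Sp$, and the task is to show that $v$ is an equivalence.

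I would compute $P_1 T$ via the colimit formula of \ref{section-good}.(\ref{item-colim}). The functor $T$ is reduced, since both $\Sigma^\infty$ and $\Omega^\infty$ preserve zero objects; hence
\[P_1 T(X) \simeq \colim_n \Omega^n \Sigma^\infty \Omega^\infty \Sigma^n X.\]
Using the canonical equivalence $\Omega^n \Sigma^n \simeq \id_\Sp$ on the stable category $\Sp$, the map $v_X$ is realized on each stage of the colimit by $\Omega^n \eta_{\Sigma^n X} : \Omega^n \Sigma^\infty \Omega^\infty \Sigma^n X \to \Omega^n \Sigma^n X \simeq X$. Once $v$ is shown to be an equivalence, the colimit formula (\ref{eq-xcolim}) is an immediate consequence of these identifications.

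To show that these stage-wise maps assemble into an equivalence on the colimit, I would invoke \cite[1.4.2.24]{ha}, which identifies any spectrum $X$ with the spectrification of its underlying sequence of pointed spaces $(\Omega^\infty \Sigma^n X)_{n \geq 0}$ equipped with its structure equivalences $\Omega^\infty \Sigma^n X \simeq \Omega\Omega^\infty\Sigma^{n+1} X$. This spectrification is precisely the sequential colimit displayed above, so $v_X$ is an equivalence naturally in $X$, which completes the proof of the main claim; the final displayed equivalence (\ref{eq-xcolim}) then drops out by combining the two descriptions of $P_1 T$.

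The main obstacle is bridging the Goodwillie-theoretic colimit formula with the classical presentation of a spectrum as a colimit of desuspensions of suspension spectra of its structure spaces. Intuitively, the maps $\Omega^n \Sigma^\infty \Omega^\infty \Sigma^n X \to X$ are ``more and more equivalences'' as $n$ grows by a Freudenthal-type stabilization argument, so the colimit ought to recover $X$; making this precise in the $\infty$-categorical framework is what the reference to \cite[1.4.2.24]{ha} supplies, replacing what might otherwise be a hands-on connectivity or spectral sequence argument with a clean universal-property statement.
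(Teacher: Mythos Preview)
Your overall strategy matches the paper's: both reduce the claim to the colimit identification $X \simeq \colim_n \Omega^n \Sigma^\infty \Omega^\infty \Sigma^n X$ and both point to \cite[1.4.2.24]{ha} as the key input. The paper simply reverses your order, first establishing the colimit formula and then reading off the excisive approximation statement.

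However, your invocation of \cite[1.4.2.24]{ha} is where the argument becomes a genuine gap rather than a sketch. That result asserts the \emph{limit} presentation $\Sp \simeq \lim(\cdots \xrightarrow{\Omega} \S_* \xrightarrow{\Omega} \S_*)$; it does not say that a spectrum is the colimit of desuspended suspension spectra of its structure spaces, nor does it mention spectrification in the sense you use. Passing from the limit description to the colimit formula is exactly the nontrivial content of the proposition, and it is not formal: one must show that the cocone $\{\Omega^n \Sigma^\infty \Omega^\infty \Sigma^n X \to X\}_n$ induced by the counit is a colimit cocone, naturally in $X$. The paper carries this out by a careful Yoneda argument: it identifies $\Map_\Sp(X,-)$ with $\lim_n \Map_{\S_*}(\Omega^\infty\Sigma^n X, \Omega^\infty\Sigma^n -)$ using the limit presentation of $\Sp$ (tracking naturality via left fibrations), then applies the $(\Sigma^\infty,\Omega^\infty)$ adjunction to rewrite this as $\Map_\Sp(\colim_n \Omega^n \Sigma^\infty \Omega^\infty \Sigma^n X, -)$, and finally checks that the resulting map is indeed induced by the counit cocone.

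In short, you have correctly located the crux and named the right reference, but your sentence ``this spectrification is precisely the sequential colimit displayed above'' is asserting exactly what needs to be proved. To complete the argument you would need to supply the Yoneda/adjunction manipulation that converts the limit description of $\Sp$ into the colimit description of an individual spectrum, along the lines the paper does.
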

\begin{proof}
We will first establish the natural equivalence (\ref{eq-xcolim}). We will then observe that this equivalence is obtained from the counit $\Sigma^\infty \Omega^\infty X \to X$, in such a way that the main assertion will have been proven, by \ref{section-good}.(\ref{item-colim}).

Let $X$ be a spectrum. First, we prove there is a natural equivalence of functors 
 \begin{align*}
 \Map_{\Sp}(X, -) \xrightarrow{ \simeq  } \lim_{n\in \N} \Map_{\S_*}(\Omega^\infty \Sigma^n X, \Omega^\infty \Sigma^n - )
\end{align*}
where the arrows in the sequential limit are loop functors. We do this carefully, taking care of naturality: indeed, it is easier to see that the two functors are objectwise equivalent using that $\Sp \simeq \lim( \cdots \xrightarrow{ \Omega } \S_* \xrightarrow{ \Omega } \S_*)$ \cite[1.4.2.24]{ha}, writing a mapping space as a pullback and commuting pullbacks with limits.

By \cite[2.2.1.2]{htt}, it suffices to establish an equivalence of the corresponding left fibrations.
The left hand side corresponds to the left fibration $\Sp_{X/} \to \Sp$. 
For the right hand side, first observe that for $n\in \N$, the functor $\Map_{\S_*}(\Omega^\infty \Sigma^n X, - )$ corresponds to the left fibration 
$(\S_*)_{\Omega^\infty \Sigma^n X /} \to \S_*$. Pulling back this left fibration along the functor $\Omega^\infty \Sigma^n: \Sp \to \S_*$,
\begin{center}
 \begin{tikzcd}
  \Sp \times_{\S_*} (\S_*)_{\Omega^\infty \Sigma^n X /} \arrow[r] \arrow[d, twoheadrightarrow] & (\S_*)_{\Omega^\infty \Sigma^n X/} \arrow[d, twoheadrightarrow] \\
  \Sp \arrow[r, "\Omega^\infty \Sigma^n"] & \S_*
 \end{tikzcd}
\end{center}
gives us the left fibration that classifies the functor $\Map_{\S_*}(\Omega^\infty \Sigma^n X, \Omega^\infty \Sigma^n - )$.
Taking limits, it follows that the functor $\lim_n \Map_{\S_*}(\Omega^\infty \Sigma^n X, \Omega^\infty \Sigma^n - )$ 
corresponds to the left fibration 
$\lim_n (\Sp \times_{\S_*} (\S_*)_{\Omega^\infty \Sigma^n X /}) \simeq \Sp \times_{\lim_{\mathbb{N}}\S_*} \lim_n(\S_*)_{\Omega^\infty \Sigma^n X /}$.

By the Yoneda lemma, a map of left fibrations $\Sp_{X/} \to  \Sp \times_{\lim_{\mathbb{N}}\S_*} \lim_n(\S_*)_{\Omega^\infty \Sigma^n X /}$ is 
uniquely determined by a choice of object in the fiber of $\Sp \times_{\lim_{\mathbb{N}}\S_*} \lim_n(\S_*)_{\Omega^\infty \Sigma^n X /} \to \Sp$
over $X$, which is given by the space $\{X\} \times \lim_n \Map_{\S_*}(\Omega^\infty \Sigma^n X, \Omega^\infty \Sigma^n X)$. Thus the object
$(X, (\id_{\Omega^\infty \Sigma^n X})_{n\in \N})$ in the fiber induces a map of left fibrations over $\Sp$
$$I: \Sp_{X/} \to  \Sp \times_{\lim_{\mathbb{N}}\S_*} \lim_n(\S_*)_{\Omega^\infty \Sigma^n X /}.$$
We want to prove this map is an equivalence. 
Note we can recover the right hand side directly as the following pullback: 
 \begin{center}
 \begin{tikzcd}
  \Sp \times_{\lim_\mathbb{N} \S_*} \lim_n (\S_*)_{\Omega^\infty \Sigma^n X/ } \arrow[d, "\pi_1", twoheadrightarrow] %
  \arrow[r, "\simeq", "\pi_2"'] & \lim_n (\S_*)_{\Omega^\infty \Sigma^n X/ } \arrow[d, twoheadrightarrow] \\
  \Sp \arrow[r, "\simeq"]  & \lim_\mathbb{N} \S_*.
 \end{tikzcd}
 \end{center}
The bottom equivalence is $\Sp \simeq \lim( \cdots \xrightarrow{ \Omega } \S_* \xrightarrow{ \Omega } \S_*) = \lim_\mathbb{N} \S_*$, from \cite[1.4.2.24]{ha}. 
This implies that the top horizontal map is an equivalence as well 
\cite[3.3.1.3]{htt}. 
Thus, in order to prove that $I$ is an equivalence, by $2$-out-of-$3$ it suffices to show that
$$ \pi_2 \circ I: \Sp_{X/} \to \lim_n(\S_*)_{\Omega^\infty\Sigma^n X/}$$
is an equivalence. 

First, we will construct an equivalence $\Sp_{X/}\xrightarrow{\simeq} \lim_n(\S_*)_{\Omega^\infty\Sigma^n X/}$. Then we will observe it is indeed $\pi_2 \circ I$.
We have 
\begin{align*}
 \Sp_{X/} \simeq \Sp^{\Delta^1} \times_{\Sp} \Delta^0 \xrightarrow{  \simeq  } (\lim_{\mathbb{N}} \S_*)^{\Delta^1} \times_{\lim_{\mathbb{N}} \S_*} \Delta^0.
\end{align*}
Using that limits commute with pullbacks and exponentials $(-)^{\Delta^1}$, 
\begin{align*}
 \lim_{\mathbb{N}}((\S_*)^{\Delta^1} \times_{\S_*} \Delta^0) %
 \simeq \lim_n(\S_*)_{\Omega^\infty\Sigma^n X/}
\end{align*}
gives us an equivalence. Notice this equivalence takes the object $\id_X$ in $\Sp_{X/}$ to $(\id_{\Omega^\infty \Sigma^n X})_{n \in \mathbb{N}}$ 
and thus, by the Yoneda lemma, is equivalent to $\pi_2 \circ I$, as they both take $\id_X$ to the same object. 
This gives us the desired result.

Next, we have a natural equivalence 
\begin{align*}
 \lim_n\Map_{\S_*}(\Omega^\infty \Sigma^n X, \Omega^\infty \Sigma^n - ) \xrightarrow{\simeq} \lim_n\Map_{\Sp}(\Omega^n \Sigma^\infty \Omega^\infty \Sigma^n X, -)
\end{align*}
induced by the adjunction $(\Sigma^\infty, \Omega^\infty)$. 
Finally, we have a natural equivalence 
\begin{align*}
 \lim_n\Map_{\Sp}(\Omega^n \Sigma^\infty \Omega^\infty \Sigma^n X, -) \xrightarrow{ \simeq } \Map_{\Sp}(\colim_n \Omega^n \Sigma^\infty \Omega^\infty \Sigma^n X, -).
\end{align*}
 Combining all three gives us a natural equivalence 
\begin{align*}
 \Map_{\Sp}(X, -) \xrightarrow{ \simeq } \Map_{\Sp}(\colim_n \Omega^n \Sigma^\infty \Omega^\infty \Sigma^n X, -)
\end{align*}
which by the Yoneda lemma is induced by a map $\colim_n \Omega^n \Sigma^\infty \Omega^\infty \Sigma^n X \to X$ and 
is explicitly given by the image of the identity map $\id_X$ under this natural equivalence. 
By inspection, the map $\colim_n \Omega^n \Sigma^\infty \Omega^\infty \Sigma^n X \to X$ can be characterized as the unique map 
that comes from the cocone given by $\Omega^n \Sigma^\infty \Omega^\infty \Sigma^n X \to \Omega^n \Sigma^n X \xrightarrow{\simeq} X$ where the first arrow is induced by the counit of the $(\Sigma^\infty,\Omega^\infty)$ adjunction. %
\end{proof}

Let us fix some notation necessary for the statement of the following theorem and its proof. Let $B^\infty:\Grp(\S)\to \Spc$ denote the standard equivalence between the $\infty$-categories of $E_\infty$-groups and of connective spectra \cite[5.2.6.26]{ha}. Let $B:\Grp(\S)\to \Grp(\S)$ denote the bar construction functor, which can be defined in this context simply as the functor that takes $G$ to the pushout $*\leftarrow G \to *$. We will iterate this functor, getting $B^n:\Grp(\S)\to \Grp(\S)$ for $n\geq 1$.

If we take an $E_1$-group instead of an $E_\infty$-group as input, then we can extend the bar construction to a functor $\Bar:\Grpp{1}(\S)\to \S_*$ \cite[5.2.2]{ha}. It agrees with $BG$ whenever this makes sense: namely, if $G\in \Grp(\S)$, then the pointed space underlying $BG$ is equivalent to $\Bar$ of the $E_1$-group underlying $G$; this follows from the remarks in \cite[5.2.2.3/4]{ha}.

There are also iterated bar construction functors $\Bar^{(n)}:\Grpp{n}(\S)\to \S_*^{\geq n}$ for $n\geq 1$ taking an $E_n$-group and giving a pointed $(n-1)$-connected space, obtained from iterations of $\Bar$ \cite[Page 802, 5.2.3]{ha}. These $\Bar^{(n)}$ are equivalences \cite[5.2.6.10(3)]{ha}, and coincide with $B^n$ after judicious forgetting of structure. %

\begin{thm} \label{thm-taqthom} Let $R$ be an $E_\infty$-ring spectrum. Let $f:G\to \Pic(R)$ be a map of $E_\infty$-groups. There is an equivalence of $Mf$-modules
\[L_{Mf/R} \simeq Mf \sma B^\infty G.\]
\begin{proof} By \cite[4.11]{rsv-thom}, $S^n\otimes_R Mf \simeq Mf \sma \bS[B^nG]$ in $\CAlg_\under{Mf}{Mf}$. %
Now, note that
\begin{equation}\label{eq-otilde-mf}
S^n\otilde_R Mf\simeq Mf \sma \Sigma^\infty B^n G
\end{equation}
as $Mf$-modules. Indeed, since $\xymatrix{S^0\simeq \ast_+ \ar[r]^-{(x_0)_+} & (B^n G)_+ \ar[r] & B^nG}$ is a cofiber sequence in $\S_*$ where $x_0$ denotes the basepoint of $B^nG$, then applying $Mf\sma \Sigma^\infty(-)$ we get a cofiber sequence in $\Mod_{Mf}$
\[Mf \simeq Mf \sma \Sigma^\infty_+(*) \to Mf \sma \Sigma^\infty_+B^nG \to Mf \sma \Sigma^\infty B^n G,\]
whereas by the definition of $J$ in \cref{lemma-ij} we get the equivalence (\ref{eq-otilde-mf}).

By \cref{prop-colim} we get equivalences of $Mf$-modules
\begin{align} 
\nonumber L_{Mf/R}
&\simeq \colim_{\Mod_{Mf}}(\xymatrix{Mf \sma \Sigma^\infty G \ar[r] & \Omega (Mf \sma  \Sigma^\infty BG) \ar[r] & \Omega^2 (Mf \sma \Sigma^\infty B^2G) \ar[r] & \cdots }) \\
\label{eq-mfcolim}&\simeq Mf \sma\colim_{\Sp}(\xymatrix{\Sigma^\infty G \ar[r] & \Omega \Sigma^\infty BG \ar[r] & \Omega^2 \Sigma^\infty B^2G \ar[r] & \cdots })
\end{align}
where in the second line we have used the stability of $\Sp$ and $\Mod_{Mf}$ together with the fact that $Mf\sma -:\Sp \to \Mod_{Mf}$ commutes with colimits; note that $\Omega$ now denotes the loop functor in $\Sp$.  %

We will now prove that $B^\infty G$ is equivalent to the colimit in (\ref{eq-mfcolim}), which we can rewrite as
\begin{equation}\label{eq-mfcolim2}
\colim_{\Sp} (\Sigma^\infty U_0 G \to \Omega\Sigma^\infty i_1\Bar (U_1G)  \to \Omega^2\Sigma^\infty i_2 \Bar^{(2)}(U_2G) \to \cdots)
\end{equation}
where $U_n:\Grp(\S) \to \Grpn(\S)$ is the forgetful functor, $E_0$-groups are pointed spaces, and $i_n:\S_*^{\geq n}\to \S_*$ is the inclusion functor. Now note that $B^\infty$ is constructed as the limit of the $\Bar^{(n)}$ as follows \cite[5.2.6.26]{ha} (the functors $\beta_n$ in that reference are the inverses of $\Bar^{(n)}$ \cite[5.2.6.10(3)]{ha})
\[\xymatrix{
\Spc \simeq \lim(  \cdots \ar[r]^-\Omega & \S_*^{\geq n+1}\ar[r]^\Omega & \S_*^{\geq n} \ar[r]^-\Omega & \cdots \ar[r]^-\Omega & \S_*^{\geq 1} \ar[r]^-\Omega & \S_*) \\
\Grp(\S)\ar@<1cm>[u]^-{B^\infty}_-\sim \simeq \lim (\cdots \ar[r] & \Grpp{n+1}(\S) \ar[u]^-{\Bar^{(n+1)}} \ar[r] & \Grpp{n}(\S) \ar[r] \ar[u]^-{\Bar^{(n)}} &  \cdots \ar[r] & \Grpp{1}(\S) \ar[r] \ar[u]^-\Bar & \S_* \ar[u]^-\id),
}\]
so in particular we have a commutative diagram
\[\xymatrix{\Spc \ar[r] & \S_*^{\geq n}  \\ \Grp(\S) \ar[u]^-{B^\infty} \ar[r]_{U_n} & \Grpn(\S) \ar[u]_-{\Bar^{(n)}} }\]
where the top horizontal functor is the projection to the corresponding term of the limit. After composing it with $i_n$ this functor becomes $\Omega^{\infty-n}=\Omega^\infty \Sigma^n:\Spc\to \S_*$, the $n$-th space functor. %
Therefore, (\ref{eq-mfcolim2}) is equivalent to
\[\colim_{\Sp} (\Sigma^\infty \Omega^\infty B^\infty G \to \Omega\Sigma^\infty \Omega^\infty \Sigma B^\infty G  \to \Omega^2\Sigma^\infty \Omega^\infty \Sigma^2 B^\infty G \to \cdots).\]
This is equivalent to $B^\infty G$ by \cref{Proposition}.
\end{proof}
\end{thm}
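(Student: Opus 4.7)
The strategy is to feed the colimit formula \cref{prop-colim} for $L_{Mf/R}$ into the $\Sigma^\infty \Omega^\infty$-colimit expression for a spectrum coming from \cref{Proposition}, after identifying the terms of the former as shifts of $Mf \sma B^n G$. The chain of identifications is quite rigid once the building blocks are in place.

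First, I would apply \cref{prop-colim} with $A=R$, $B=Mf$ to write
\[L_{Mf/R} \simeq \colim_{\Mod_{Mf}}\bigl( S^0 \otilde_R Mf \to \Omega(S^1\otilde_R Mf) \to \Omega^2(S^2 \otilde_R Mf) \to \cdots \bigr).\]
The crucial input is then to identify $S^n \otilde_R Mf$. Here I would invoke the result from \cite{rsv-thom} cited in the body, namely $S^n \otimes_R Mf \simeq Mf \sma \bS[B^n G]$ in $\CAlg_{\under{Mf}{Mf}}$, together with the fact that (by \cref{lemma-ij}) $S^n \otilde_R Mf$ is the fiber over $Mf$ of the augmentation $S^n \otimes_R Mf \to Mf$. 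Using the pointed cofiber sequence $S^0 \to (B^nG)_+ \to B^n G$ and smashing with $Mf$ gives
\[S^n \otilde_R Mf \simeq Mf \sma \Sigma^\infty B^n G.\]

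With this in hand, and since $Mf \sma (-): \Sp \to \Mod_{Mf}$ preserves colimits (both categories being stable), the colimit pulls out to give
\[L_{Mf/R} \simeq Mf \sma \colim_{\Sp}\bigl( \Sigma^\infty G \to \Omega \Sigma^\infty B G \to \Omega^2 \Sigma^\infty B^2 G \to \cdots \bigr).\]
It then remains to recognize the spectrum-level colimit as $B^\infty G$. For this I would use the construction of $B^\infty$ as a limit of the iterated bar functors $\Bar^{(n)}$ from \cite[5.2.6.26]{ha}: projection to the $n$-th term composed with the forgetful inclusion $i_n:\S_*^{\geq n} \to \S_*$ sends $B^\infty G$ to $\Omega^\infty \Sigma^n B^\infty G$, and agrees with $\Bar^{(n)}(U_n G) \simeq B^n G$ on underlying pointed spaces. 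Substituting, the colimit becomes
\[\colim_{\Sp}\bigl( \Sigma^\infty \Omega^\infty B^\infty G \to \Omega \Sigma^\infty \Omega^\infty \Sigma B^\infty G \to \Omega^2 \Sigma^\infty \Omega^\infty \Sigma^2 B^\infty G \to \cdots \bigr),\]
which is exactly the expression identified with $B^\infty G$ in \cref{Proposition}.

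The step I expect to require the most care is the identification of the transition maps. It is easy to see each term is $\Omega^n \Sigma^\infty B^n G$, but one must check that the natural maps $e_n'$ from \cref{prop-colim} match, under the identifications above, with the counit-induced maps appearing in \cref{Proposition}. This boils down to verifying that the universal map $S^n \otilde_R Mf \to \Omega(S^{n+1} \otilde_R Mf)$ induced by the pushout presentation $S^{n+1} = * \cup_{S^n} *$ in $\S_*$ corresponds, after the equivalence with $Mf \sma \Sigma^\infty B^n G$ and under the identification of $B^nG$ with $\Omega B^{n+1}G$ via the bar construction, to the map coming from the unit $\Sigma^n B^\infty G \to \Omega \Sigma^{n+1} B^\infty G$; this is a naturality check using that the equivalences $S^n \otimes_R Mf \simeq Mf \sma \bS[B^n G]$ are compatible with the pushout structure in $\S_*$, which is built into the proof of \cite[4.11]{rsv-thom}.
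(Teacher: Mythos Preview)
Your proposal is correct and follows essentially the same route as the paper's proof: apply \cref{prop-colim}, identify each term $S^n\otilde_R Mf$ via \cite[4.11]{rsv-thom} and the cofiber sequence $S^0\to (B^nG)_+\to B^nG$, pull $Mf\sma-$ outside the colimit, and then recognize the resulting sequential colimit as $B^\infty G$ using the limit description of $B^\infty$ together with \cref{Proposition}. Your extra paragraph flagging the compatibility of the transition maps is a point the paper largely leaves implicit, so you are being slightly more careful there rather than diverging.
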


\begin{rmk} A model-categorical version of \cref{thm-taqthom} first appeared in \cite[Corollary, Page 907]{basterra-mandell}. Their result, however, only applies to Thom spectra of maps to $BGL_1(\bS)$, whereas ours applies to maps to $\Pic(R)$ where $R$ is any $E_\infty$-ring spectrum. Already considering maps into $\Pic(\bS)$ leads to interesting, non-connective examples, as we shall now see.
\end{rmk}

\begin{ex} \label{ex-mu-mup} Let $MU$ denote the complex cobordism spectrum, which is the Thom $E_\infty$-ring spectrum of the complex $J$-homomorphism $BU\to \Pic(\bS)$. Then \[L_{MU}\simeq MU\sma bu\]
as $MU$-modules, where $bu$ denotes $B^\infty BU$. This recovers \cite[Corollary, Page 907]{basterra-mandell}. Similarly, let $MUP$ denote the periodic complex cobordism spectrum, which is the Thom $E_\infty$-ring spectrum of the complex $J$-homomorphism $BU\times \Z\to \Pic(\bS)$. Then
\[L_{MUP} \simeq MUP \sma ku\]
as $MUP$-modules, where $ku\simeq B^\infty (BU \times \Z)$ is the connective complex $K$-theory spectrum.
\end{ex}

\subsection{The cotangent complex as a Thom spectrum}
If $G$ is not only an $E_\infty$-group but also an \emph{$E_\infty$-ring space} in the sense of \cite[7.1]{ggn}, then $B^\infty G$ is a connective $E_\infty$-ring spectrum. Many interesting examples of $E_\infty$-ring spaces arise as group completions of \emph{$E_\infty$-rig spaces} (similar to $E_\infty$-ring spaces but now the underlying additive structure does not necessarily admit inverses). For example, $\bigsqcup_{n\geq 0} B\Sigma_n$, $\bigsqcup_{n\geq 0}BU(n)$ and $\bigsqcup_{n\geq 0} BGL_n(A)$ where $A$ is a commutative ring %
are examples of $E_\infty$-rig spaces: their corresponding connective $E_\infty$-ring spectra are $\bS$, $ku$ and the algebraic $K$-theory $K(A)$ respectively. See \cite[Sections 7/8]{ggn} for more details.

\begin{prop} \label{prop-ringspace} Let $R$ be an $E_\infty$-ring spectrum, let $G$ be an $E_\infty$-ring space and let $f:G\to \Pic(R)$ be an $E_\infty$-map (with respect to the $E_\infty$-group structure of $G$). Then the $R$-module $L_{Mf/R}$ underlies a Thom $E_\infty$-$(R\sma B^\infty G)$-algebra.%
\end{prop}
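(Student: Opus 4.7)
The plan is to exhibit $L_{Mf/R}$ as the Thom $E_\infty$-$(R\sma B^\infty G)$-algebra of an $E_\infty$-group map obtained from $f$ by base change, and then identify the result with the right-hand side of \cref{thm-taqthom}.

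First, since $G$ is an $E_\infty$-ring space, its associated connective spectrum $S\coloneqq R\sma B^\infty G$ is naturally an $E_\infty$-ring spectrum (using the multiplicative $E_\infty$-structure of $G$), and the unit $R\to S$ is a map of $E_\infty$-ring spectra. Base change along this unit induces a symmetric monoidal functor $-\sma_R S:\Mod_R\to \Mod_S$, which takes invertible $R$-modules to invertible $S$-modules and hence restricts to a map of $E_\infty$-groups
\[\Pic(R)\longrightarrow \Pic(S).\]
Post-composing with $f$ (viewed as a map of $E_\infty$-groups using only the additive $E_\infty$-structure on $G$) yields an $E_\infty$-group map
\[\widetilde{f}:G\xrightarrow{\ f\ } \Pic(R)\longrightarrow \Pic(S),\]
to which the Thom construction associates an $E_\infty$-$S$-algebra $M\widetilde f$.

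Next, I would invoke the standard projection/base change property of the Thom construction: since $Mf$ is a colimit in $\Mod_R$ and $-\sma_R S:\Mod_R\to \Mod_S$ preserves colimits and commutes with the inclusion of the Picard subspace, one has a canonical equivalence of $E_\infty$-$S$-algebras
\[M\widetilde f \simeq Mf\sma_R S \simeq Mf \sma B^\infty G.\]
Combined with \cref{thm-taqthom}, this identifies the underlying $Mf$-module (and \emph{a fortiori} the underlying $R$-module) of $M\widetilde f$ with $L_{Mf/R}$, giving the statement.

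The only non-formal step is the projection formula $M\widetilde f\simeq Mf\sma_R S$ together with the verification that $\Pic(R)\to \Pic(S)$ is indeed a map of $E_\infty$-groups, both of which are consequences of the symmetric monoidality of $-\sma_R S$ and of the colimit description of the Thom functor used throughout this section (cf.\ the use of \cite[4.11]{rsv-thom} in the proof of \cref{thm-taqthom}). I do not expect any serious obstacle here; the main care needed is book-keeping to ensure that the $E_\infty$-$(R\sma B^\infty G)$-algebra structure produced by the Thom construction matches, via \cref{thm-taqthom}, the $Mf$-module structure on $L_{Mf/R}$.
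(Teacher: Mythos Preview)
Your proposal is correct and follows essentially the same route as the paper: define $T=R\sma B^\infty G$, use the symmetric monoidality of $-\sma_R T$ to obtain an $E_\infty$-map $\Pic(R)\to\Pic(T)$, compose with $f$, and identify the resulting Thom $E_\infty$-$T$-algebra with $Mf\sma B^\infty G\simeq L_{Mf/R}$ via \cref{thm-taqthom}. The paper spells out the colimit manipulation you call the ``projection formula'' as an explicit chain of equivalences, but otherwise the arguments coincide.
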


\begin{proof}
First, note that if $R\to T$ is a map of $E_\infty$-ring spectra, then extension of scalars restricts to an $E_\infty$-map $-\sma_R T:\Pic(R)\to \Pic(T)$ making the following diagram commute:
 \[\xymatrix{
 \Pic(R) \ar@{^(->}[r] \ar[d]_-{-\sma_R T} & \Mod_R \ar[d]^-{-\sma_R T} \\ 
 \Pic(T) \ar@{^(->}[r] & \Mod_T.
 }\]
 Indeed, the functor $-\sma_RT:\Mod_R\to \Mod_T$ takes invertible $R$-modules to invertible $T$-modules.
 
 Now recall that, as an $R$-module, $Mf\simeq \colim(G\xrightarrow{f} \Pic(R) \to \Mod_R)$. Let $T=R\sma B^\infty G$. Since $-\sma_R T:\Mod_R \to \Mod_T$ preserves colimits, we get equivalences of $T$-modules
 \begin{align*}
 Mf \sma B^\infty G &\simeq Mf \sma_R T \\
& \simeq \colim( \xymatrix{G \ar[r]^-f & \Pic(R) \ar[r] & \Mod_R \ar[r]^-{-\sma_R T} & \Mod_T} ) \\
  &\simeq \colim( \xymatrix{G \ar[r]^-f & \Pic(R) \ar[r]^-{-\sma_R T} & \Pic(T) \ar[r] & \Mod_T}) \\
  &=M((-\sma_R T) \circ f).
 \end{align*}
Since we proved in \cref{thm-taqthom} that $Mf\sma B^\infty G\simeq L_{Mf/R}$ and $(-\sma_R T) \circ f$ is an $E_\infty$-map, 
this proves the result.
\end{proof}
\begin{rmk} This echoes with the result that the factorization homology of a Thom $E_n$-algebra is a Thom spectrum \cite[4.2]{klang-thom}, or with the related result that the tensor of a Thom $E_\infty$-algebra with a space is again a Thom $E_\infty$-algebra \cite[4.10]{rsv-thom}.
\end{rmk}

\begin{ex}
Continuing the example of $MUP$ from \cref{ex-mu-mup}, if we take $f$ to be the complex $J$-homomorphism $BU\times \Z\to \Pic(\bS)$, then $L_{MUP}\simeq MUP\sma ku$ underlies a Thom $E_\infty$-$ku$-algebra, even if $ku$ is not the Thom spectrum of any $E_3$-map from an $E_3$-group \cite{ahl-ku-thom}.
\end{ex}

In \cref{prop-ringspace}, we proved that $L_{Mf/R}$ is the Thom module of an $E_\infty$-map provided $G$ is an $E_\infty$-ring space. The hypothesis was not superfluous: let us now give an example of a cotangent complex of a Thom $E_\infty$-ring spectrum which cannot be the Thom module of an $E_\infty$-map, simply because it would then be an $E_\infty$-ring spectrum and this cannot happen in this example:
\begin{ex}
Let $G$ be an abelian group of infinite order such that every element has finite order. For example, one could take $\Q/\Z$ or an infinite direct sum of cyclic groups of arbitrarily large order.  %
 \par 
 Let $f: G \to \Pic(\bS)$ be the the constant map at $\bS$. %
 Let us prove that $L_{Mf}$ is not the underlying spectrum of an $E_\infty$-ring spectrum. 
 Such a structure would induce a ring structure on $\pi_0(L_{Mf})$, so it suffices to prove that the latter has no ring structure extending its abelian group structure.
 \par 
 Before we prove this, let us use \cref{thm-taqthom} to compute $L_{Mf}$. We have equivalences of spectra
 $$L_{Mf} \simeq Mf \sma B^\infty G \simeq \Sigma^\infty_+ G \sma HG \simeq \bigvee_{G} \bS \sma HG \simeq \bigvee_G HG$$
 where we used the fact that $B^\infty G\simeq HG$ is the Eilenberg--Mac Lane spectrum of $G$, and that since $G$ is a discrete group then $\Sigma^\infty_+ G$ is a wedge of sphere spectra. 
 In particular, $\pi_0(L_{Mf}) = \bigoplus_G G$. 
 \par 
 
Now, note that $G'=\bigoplus_G G$ is again an abelian group of infinite order such that every element has finite order. This implies that it is not the underlying abelian group of a ring. If it were, the multiplicative unit would have finite order, and this number would bound the order of all the other elements, contradicting that $G'$ has infinite order.
\end{ex}

\begin{rmk} 
As we just observed, in general $L_{Mf/R}$ is not the Thom module of an $E_\infty$-map. It is, however, the colimit of iterated loops of Thom modules of $E_\infty$-maps by (\ref{eq-mfcolim}), since by \cite[4.8]{rsv-thom}
 \[Mf \sma \Sigma^\infty B^nG %
 \simeq M(G \times B^nG \xrightarrow{ \pi_1} G \xrightarrow{f} \Pic(R))\]
 and so 
 \[L_{Mf/R} \simeq \colim_{\Mod_{Mf}} ( \Omega^nM(G \times B^nG \xrightarrow{ \pi_1 } G \xrightarrow{f} \Pic(R))).\]
\end{rmk}

\section{Étale extensions and solid ring spectra} \label{section-etale}

We will now extend the results of the previous section to compute cotangent complexes of two different types of extensions of Thom $E_\infty$-algebras.\\

Following \cite[7.5.0.1/2/4]{ha}, a map of (ordinary) commutative rings $A\to B$ is \emph{étale} if $B$ is finitely presented as an $A$-algebra, $B$ is flat as an $A$-module, and there exists an idempotent element $e\in B\otimes_A B$ such that the multiplication map $B\otimes_A B\to B$ induces an isomorphism $(B\otimes_A B)[e^{-1}] \cong B$.  If $A\to B$ is a map of $E_\infty$-ring spectra, it is \emph{étale} if $\pi_0(A)\to \pi_0(B)$ is étale and $B$ is flat as an $A$-module, i.e. the natural map \[\pi_*(A)\otimes_{\pi_0(A)} \pi_0(B) \to \pi_*(B)\] is an isomorphism.

\begin{prop} \label{prop-cotetale} If $A\to B$ is an étale map of $E_\infty$-ring spectra, then $L_{B/A}$ vanishes. %
Therefore, if $R$ is an $E_\infty$-ring spectrum and $A\to B$ is a map of $E_\infty$-$R$-algebras which is étale, there is an equivalence of $B$-modules
\[L_{B/R}\simeq B\sma_A L_{A/R}.\]
\begin{proof}
The first statement is \cite[7.5.4.5]{ha}. Note that Lurie adds a connectivity hypothesis, but it is not used in the proof. %
The second statement now follows from the transitivity cofiber sequence \cite[7.3.3.6]{ha}
\[B\sma_A L_{A/R} \to L_{B/R} \to L_{B/A}. \qedhere\]
\end{proof}
\end{prop}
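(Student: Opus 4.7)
The plan is to reduce the proposition to two inputs: the vanishing of the cotangent complex for an étale map, and the transitivity cofiber sequence for a composite of maps of $E_\infty$-ring spectra.

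First I would prove that $L_{B/A}\simeq 0$ whenever $A\to B$ is étale. The conceptual reason is the formal étaleness property: by \cite[7.5.0.6]{ha}, étale maps are detected on $\pi_0$ and lift uniquely along square-zero extensions. Since by the interpretation in \eqref{intro-isos} (and its spectral refinement encoded in $\Map_{\Mod_B}(L_{B/A},M)\simeq \Map_{\CAlg_\AB}(B,B\oplus M)$) the cotangent complex classifies $A$-linear derivations, i.e.\ $A$-algebra sections of trivial square-zero extensions, this uniqueness forces $L_{B/A}$ to receive no nontrivial maps into any $B$-module, hence to vanish. To make this precise one argues as in \cite[7.5.4.5]{ha}: using flatness of $A\to B$ one can reduce, via Postnikov towers, to the connective setting; the base of the induction is the classical vanishing of the cotangent complex of an étale map of ordinary commutative rings applied to $\pi_0(A)\to\pi_0(B)$, and the inductive step proceeds by obstruction theory for square-zero extensions, using that étale maps of $E_\infty$-rings lift uniquely in the sense of \cite[7.5.0.6]{ha}. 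This is essentially the content of \cite[7.5.4.5]{ha}; I would simply cite it, noting as a remark that the connectivity hypothesis appearing there is not used in the argument.

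Having the vanishing in hand, the second assertion is formal. Apply the transitivity cofiber sequence of cotangent complexes \cite[7.3.3.6]{ha} to the composite $R\to A\to B$, obtaining a cofiber sequence in $\Mod_B$
\[B\sma_A L_{A/R}\longrightarrow L_{B/R}\longrightarrow L_{B/A}.\]
Since $L_{B/A}\simeq 0$ by the first part and $\Mod_B$ is stable, the leftmost map is an equivalence, yielding the desired equivalence $L_{B/R}\simeq B\sma_A L_{A/R}$.

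The main (and only real) obstacle is the vanishing statement, which is a genuinely nontrivial input relying on the structure theory of étale maps of $E_\infty$-ring spectra as developed in \cite[7.5]{ha}. Once this is taken as a black box, the rest of the proposition is a one-line consequence of the transitivity sequence.
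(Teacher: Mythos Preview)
Your proposal is correct and follows essentially the same approach as the paper: cite \cite[7.5.4.5]{ha} for the vanishing of $L_{B/A}$ (noting that the connectivity hypothesis there is not actually used), and then apply the transitivity cofiber sequence \cite[7.3.3.6]{ha} to conclude. The additional conceptual explanation you give for the vanishing is helpful motivation but not part of the paper's terse proof.
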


\begin{cor} \label{cor-taqetale} Let $R$ be an $E_\infty$-ring spectrum. Let $f:G\to \Pic(R)$ be a map of $E_\infty$-groups. Let $Mf\to B$ be a map of $E_\infty$-$R$-algebras which is étale. There is an equivalence of $B$-modules
\[L_{B/R} \simeq B\sma B^\infty G.\]
\begin{proof}
By \cref{prop-cotetale}, $L_{B/R}\simeq B\sma_{Mf} L_{Mf/R}$ as $B$-modules, and by \cref{thm-taqthom} we have an equivalence of $Mf$-modules $L_{Mf/R}\simeq Mf \sma B^\infty G$. Putting these together finishes the proof.
\end{proof}
\end{cor}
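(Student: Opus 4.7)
The plan is to combine the two main inputs already assembled in this section: Proposition \ref{prop-cotetale}, which controls the cotangent complex of an étale extension, and \cref{thm-taqthom}, which computes the cotangent complex of a Thom $E_\infty$-algebra. Since the hypothesis gives us that $Mf \to B$ is an étale map of $E_\infty$-$R$-algebras, Proposition \ref{prop-cotetale} immediately supplies an equivalence of $B$-modules
\[L_{B/R} \simeq B \sma_{Mf} L_{Mf/R}.\]

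The next step is to apply \cref{thm-taqthom} to the map $f:G \to \Pic(R)$, which identifies $L_{Mf/R}$ with $Mf \sma B^\infty G$ as $Mf$-modules. Substituting this into the previous equivalence yields
\[L_{B/R} \simeq B \sma_{Mf} (Mf \sma B^\infty G).\]
It then remains to observe that the right-hand side simplifies to $B \sma B^\infty G$. This is a standard projection-formula-style manipulation: the smash product $Mf \sma B^\infty G$ in the category of spectra is the tensor of the $Mf$-module $Mf$ with the spectrum $B^\infty G$, and this tensoring commutes with extension of scalars along $Mf \to B$. Concretely, $B\sma_{Mf}(Mf\sma B^\infty G) \simeq (B \sma_{Mf} Mf) \sma B^\infty G \simeq B \sma B^\infty G$.

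I do not expect any real obstacle here, as both non-trivial ingredients—the vanishing of the étale cotangent complex and the Thom spectrum calculation—have already been established. The only potentially delicate point is being careful that the identifications are as $B$-modules throughout, which follows from the naturality in the base ring of the constructions involved.
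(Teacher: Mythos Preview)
Your proof is correct and follows exactly the same approach as the paper: apply \cref{prop-cotetale} to get $L_{B/R}\simeq B\sma_{Mf} L_{Mf/R}$, then substitute the computation of \cref{thm-taqthom}. The only difference is that you spell out the final simplification $B\sma_{Mf}(Mf\sma B^\infty G)\simeq B\sma B^\infty G$, which the paper leaves implicit.
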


\begin{ex} \label{ex-inverting at zero} Let $R$ be an $E_\infty$-ring spectrum and $x\in \pi_0(R)$. By \cite[7.5.0.6/7]{ha}, %
there is an étale map of $E_\infty$-ring spectra $R\to R[x^{-1}]$ which universally inverts the homotopy element $x$.  %
We deduce from \cref{prop-cotetale} that there is an equivalence of $R[x^{-1}]$-modules
\[L_{R[x^{-1}]} \simeq (L_R)[x^{-1}].\]
\end{ex}

There are interesting instances where we want to invert a homotopy element $x$ that is not in degree $0$, as we shall see below.  Unfortunately, in this case the map $R\to R[x^{-1}]$ may not be étale: indeed, it may not be flat. For example, if $R$ is connective then any flat $R$-module is necessarily connective, as follows from the definition \cite[7.2.2.11]{ha}.

To remedy this, we recall the notion of solidity: An $E_\infty$-$A$-algebra $B$ is \emph{solid} if the multiplication map $\mu: B \sma_A B \to B$ is an equivalence. %
Note in this case $B\sma_A B\simeq B$ as objects of $\CAlg_\BB$.

\begin{prop} \label{prop-cotsolid} If $A$ is an $E_\infty$-ring spectrum and $B$ is a solid $E_\infty$-$A$-algebra, then $L_{B/A}$ vanishes. %
Therefore, if $R$ is an $E_\infty$-ring spectrum, $A$ an $E_\infty$-$R$-algebra and $B$ a solid $E_\infty$-$A$-algebra, then there is an equivalence of $B$-modules
\[L_{B/R}\simeq B\sma_A L_{A/R}.\]
\begin{proof}
For the first statement, consider the equivalences  
$$L_{B/A} \simeq (P_1I)(B \sma_A B) \simeq (P_1I)(B) $$
where the first equivalence is (\ref{eq-p1}) and the second follows from solidity. 
Now $(P_1I)(B)$ is trivial, since $B\in \CAlg_\BB$ is a zero object and $P_1I$ is reduced (see \ref{section-good}.(\ref{item-colim})).

The second statement now follows from the transitivity cofiber sequence \cite[7.3.3.6]{ha}
\[B\sma_A L_{A/R} \to L_{B/R} \to L_{B/A}. \qedhere\]
\end{proof}
\end{prop}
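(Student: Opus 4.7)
The plan is to handle the two statements in sequence, the second being a direct consequence of the first via transitivity.

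For the first statement, I would use the presentation of the cotangent complex as \[L_{B/A}\simeq (P_1I)(B\sma_A B)\] established in (\ref{eq-p1}). The solidity hypothesis means that the augmentation $\mu\colon B\sma_A B\to B$ is an equivalence. I would then observe that the object $(B\xrightarrow{\iota_0} B\sma_A B\xrightarrow{\mu} B)$ of $\CAlg_\BB$ is therefore equivalent in $\CAlg_\BB$ to $(B\xrightarrow{\id} B\xrightarrow{\id} B)$, which is a zero object. Since $I$ is reduced (as $I(B)=\fib(\id_B)\simeq 0$ in $\Mod_B$), its excisive approximation $P_1I$ is also reduced by \ref{section-good}.(\ref{item-colim}); hence $(P_1I)(B\sma_A B)\simeq 0$.

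For the second statement, I would invoke the transitivity cofiber sequence of Lurie \cite[7.3.3.6]{ha} associated to $R\to A\to B$, which reads
\[B\sma_A L_{A/R} \to L_{B/R} \to L_{B/A}\]
in $\Mod_B$. Combined with the vanishing just proved, the rightmost term is zero, so the first arrow is an equivalence, giving $L_{B/R}\simeq B\sma_A L_{A/R}$.

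The only step that requires a moment of care is the identification of $B\sma_A B$ with a zero object of $\CAlg_\BB$: one has to recall that $\CAlg_\BB$ is the $\infty$-category of factorizations of $\id_B$, so that $(B\xrightarrow{\id}B\xrightarrow{\id}B)$ is both initial and terminal, and then use that $\mu\circ\iota_0\simeq \id_B$ to realize the equivalence in $\CAlg_\BB$ (not merely in $\CAlg$). Everything else is a direct application of the machinery already developed in \cref{section-cotangent}.
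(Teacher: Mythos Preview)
Your proposal is correct and follows essentially the same route as the paper's proof: both use the identification $L_{B/A}\simeq (P_1I)(B\sma_A B)$ from (\ref{eq-p1}), solidity to replace $B\sma_A B$ by the zero object $B$ of $\CAlg_\BB$, reducedness of $P_1I$, and then the transitivity cofiber sequence \cite[7.3.3.6]{ha} for the second statement. Your added remark that the equivalence $B\sma_A B\simeq B$ must be checked in $\CAlg_\BB$ (not just in $\CAlg$) is a welcome clarification of a point the paper leaves implicit.
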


The following corollary is proven analogously to \cref{cor-taqetale}.

\begin{cor} \label{cor-taqsolid} Let $R$ be an $E_\infty$-ring spectrum. Let $f:G\to \Pic(R)$ be a map of $E_\infty$-groups. Let $B$ be a solid $E_\infty$-$Mf$-algebra. There is an equivalence of $B$-modules
\[L_{B/R} \simeq B\sma B^\infty G.\]
\end{cor}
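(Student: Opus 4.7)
The plan is to mirror the proof of \cref{cor-taqetale}, but using \cref{prop-cotsolid} in place of \cref{prop-cotetale}. Both propositions have the same conclusion: a vanishing statement for the cotangent complex, together with a consequent base-change formula deduced from the transitivity cofiber sequence. So once the solid analogue of vanishing is in hand (which it is, by \cref{prop-cotsolid}), the rest of the argument goes through verbatim.

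Concretely, I would first apply \cref{prop-cotsolid} to the tower $R \to Mf \to B$, where $Mf$ is an $E_\infty$-$R$-algebra and $B$ is a solid $E_\infty$-$Mf$-algebra. This yields an equivalence of $B$-modules
\[
L_{B/R} \simeq B \sma_{Mf} L_{Mf/R}.
\]
Next, I would invoke \cref{thm-taqthom}, which gives an equivalence of $Mf$-modules $L_{Mf/R} \simeq Mf \sma B^\infty G$. Smashing over $Mf$ with $B$ then produces
\[
L_{B/R} \simeq B \sma_{Mf} (Mf \sma B^\infty G) \simeq B \sma B^\infty G,
\]
where the last equivalence uses the standard associativity/unit identification $B \sma_{Mf} (Mf \sma X) \simeq B \sma X$ for any spectrum $X$.

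I do not anticipate any obstacle here, as every nontrivial input has already been established: the vanishing $L_{B/Mf} \simeq 0$ for solid extensions is \cref{prop-cotsolid}, the transitivity cofiber sequence is cited from \cite[7.3.3.6]{ha} inside that proposition, and the Thom spectrum computation is \cref{thm-taqthom}. The argument is structurally identical to the étale case, the only difference being that solidity replaces étaleness as the hypothesis ensuring the relative cotangent complex of the top extension is trivial.
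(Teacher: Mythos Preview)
Your proof is correct and follows exactly the approach indicated in the paper, which simply states that the corollary is proven analogously to \cref{cor-taqetale}. You have spelled out precisely that analogy: replace \cref{prop-cotetale} by \cref{prop-cotsolid}, then apply \cref{thm-taqthom} and cancel the $Mf$ factor.
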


\begin{ex}\label{ex-so}
 Let $R$ be an $E_\infty$-ring spectrum and $x \in \pi_*(R)$. By \cite[4.3.17]{lurie-chromatic2}, there exists an $E_\infty$-$R$-algebra $R[x^{-1}]$ where $x$ has been universally inverted. Now note that %
 $R[x^{-1}]$ is a 
  solid $E_\infty$-$R$-algebra. Indeed, by \cite[7.3]{rsv-thom}, we have 
 \[R[x^{-1}] \sma_R R[x^{-1}] \simeq R[x^{-1}][x^{-1}] \simeq R[x^{-1}].\]
Therefore, by \cref{prop-cotsolid}, %
we have an equivalence
\[L_{R[x^{-1}]} %
\simeq (L_R)[x^{-1}].\]
This generalizes  \cref{ex-inverting at zero}, where $x$ was only allowed to be in degree zero.
\end{ex}

\begin{ex}Let $KU$ denote the periodic complex topological $K$-theory $E_\infty$-ring spectrum. Snaith \cite{snaith79}, \cite{snaith81} proved that $KU\simeq \bS[K(\Z,2)][x^{-1}]$ as homotopy commutative ring spectra (i.e. commutative monoids in the homotopy category of spectra), where $x\in \pi_2 \bS[K(\Z,2)]$ is induced by the fundamental class in $K(\Z,2)$. See \cite[6.5.1]{lurie-chromatic2} for one improvement of such an equivalence to an equivalence of $E_\infty$-ring spectra.

Since $\bS[K(\Z,2)]\simeq M(K(\Z,2) \xrightarrow{\{\bS\}} \Pic(\bS))$, we can apply \cref{cor-taqsolid} and \cref{ex-so} to deduce that $L_{KU}\simeq KU \sma \Sigma^2 H\Z$. Recall that the inclusion $\Z\to \Q$ induces an equivalence $KU\sma H\Z\simeq KU \sma H\Q$ \cite[16.25]{switzer}. Combining this result with Bott periodicity, we obtain:
\[L_{KU}\simeq KU\sma H\Q,\]
the rationalization of $KU$, a result first gotten in \cite[8.4]{stonek-thhku} %
in a model-categorical context.
\end{ex}

\bibliographystyle{alpha}
\bibliography{main}
\end{document}